\definecolor{darkblue}{rgb}{0.0,0.0,0.3}
\theoremstyle{plain}
\newtheorem{theorem}{Theorem}  
\newtheorem{cor}[theorem]{Corollary}
\newtheorem{lemma}[theorem]{Lemma}
\newtheorem{prop}[theorem]{Proposition}
\numberwithin{equation}{section}
\theoremstyle{definition}
\newtheorem{defi}[theorem]{Definition}
\newtheorem{exam}[theorem]{Example}
\numberwithin{theorem}{section}
\newcommand{\m}[1]{\mathbb{#1}}
\newcommand{\mr}[1]{\mathrm{#1}}
\newcommand{\mc}[1]{\mathcal{#1}}
\newcommand{\TT}{{\m{T}}}
\newcommand{\ZZ}{{\m{Z}}}
\newcommand{\FF}{{\m{F}}}
\newcommand{\CC}{{\m{C}}}
\newcommand{\s}{\subseteq}
\newcommand{\mP}{\mc{P}}
\newcommand{\mU}{\mc{U}}
\newcommand{\Aut}{\mr{Aut}}
\newcommand{\conv}{\mr{conv}}
\newcommand{\ov}{\overline}
\newcommand{\sm}{\setminus}
\begin{document}

\title[Reduced twisted crossed products over C*-simple groups]{Reduced twisted crossed products \\ over C*-simple groups}
\author[R. S. Bryder]{Rasmus Sylvester Bryder}
\address{Department of Mathematics\\ University of Copenhagen\\
Universitetsparken 5, 2100 Copenhagen, Denmark}
\email{bryder@math.ku.dk}

\author[M. Kennedy]{Matthew Kennedy}
\address{Department of Pure Mathematics\\ University of Waterloo\\
Waterloo, ON, N2L 3G1, Canada}
\email{matt.kennedy@uwaterloo.ca}

\begin{abstract}
We consider reduced crossed products of twisted C*-dynamical systems over C*-simple groups. We prove there is a bijective correspondence between maximal ideals of the reduced crossed product and maximal invariant ideals of the underlying C*-algebra, and a bijective correspondence between tracial states on the reduced crossed product and invariant tracial states on the underlying C*-algebra. In particular, the reduced crossed product is simple if and only if the underlying C*-algebra has no proper non-trivial invariant ideals, and the reduced crossed product has a unique tracial state if and only if the underlying C*-algebra has a unique invariant tracial state. We also show that the reduced crossed product satisfies an averaging property analogous to Powers' averaging property.
\end{abstract}

\dedicatory{Dedicated to the memory of Uffe Haagerup}

\subjclass[2010]{Primary 46L35; Secondary 16S35, 43A65}
\keywords{twisted C*-dynamical system, reduced crossed product, maximal ideal, tracial state}
\thanks{The first author is supported by a PhD stipend from the Danish National Research Foundation (DNRF) through the Centre for Symmetry and Deformation at University of Copenhagen}
\thanks{The second author is supported by a grant from the Canadian National Sciences and Engineering Research Council (NSERC)}
\maketitle

\section{Introduction}
A discrete group $G$ is said to be \emph{C*-simple} if its reduced C*-algebra $C^*_r(G)$ is simple, meaning that it has no non-trivial closed two-sided ideals. In representation-theoretic terms, this is equivalent to the statement that every unitary representation of $G$ that is weakly contained in the left regular representation is weakly equivalent to the left regular representation. In this paper we consider (twisted) dynamical systems over C*-simple groups. 

The first examples of C*-simple groups were discovered in 1975 by Powers \cite{powersfreegroup}. He showed that non-abelian free groups are C*-simple. Specifically, he showed that the reduced C*-algebra of a non-abelian free group satisfies a strong kind of averaging property that, in particular, implies the simplicity of the algebra. Later, by isolating and abstracting the essential properties needed in Powers' proof, these ideas were applied to establish the C*-simplicity of many groups, and the class of C*-simple groups is now known to be quite large (see e.g. \cite{delaharpesurvey} for a survey). For example, it includes all acylindrically hyperbolic \cite{dahmaniguiradelosin} and linear \cite{poznansky} groups with trivial amenable radical.

Recent work has led to a much better understanding of C*-simplicity. In \cite{kalantarkennedy} Kalantar and the second author established a  characterization of C*-simplicity in terms of the dynamics of the action of a group on its Furstenberg boundary. A systematic study of C*-simplicity making use of this perspective was subsequently undertaken by Breuillard, Kalantar, Ozawa and the second author in \cite{kalantarkennedyozbr}. More recently, Haagerup \cite{haagerup2015} and the second author \cite{kennedy2015char} independently showed that the C*-simplicity of a group is equivalent to the reduced C*-algebra satisfying Powers' averaging property. The second author also established an ``intrinsic'' characterization of the C*-simplicity of a group in terms of its subgroup structure. We also point out the work of Le Boudec \cite{leboudec} and Raum \cites{raum1,raum2}.

An important fact about (twisted) C*-dynamical systems over C*-simple groups is that they are often much more tractable than arbitrary (twisted) C*-dynamical systems. This was first observed by de la Harpe and Skandalis in \cite{delaharpeskandalis} for C*-dynamical systems over Powers' groups. Specifically, they showed that if $G$ is a Powers' group and $A$ is a C*-algebra equipped with an action of $G$, then the reduced crossed product $A \rtimes_r G$ is simple provided that $A$ has no proper non-trivial $G$-invariant closed ideals.

The results of de la Harpe and Skandalis were later generalized to weak Powers' groups by Boca and Ni{\c{t}}ic{\u{a}} in \cite{bocanitica}, and to $(\mr{P_{com}})$ groups by Bekka, Cowling and de la Harpe in \cite{bekkacowlingdelaharpe2}. More recently in \cite{kalantarkennedyozbr}, the results were shown to hold for all C*-dynamical systems over C*-simple groups.

Twisted C*-dynamical systems over C*-simple groups were first studied by B\'{e}dos in \cite{bedosart}. He established results analogous to the results of de la Harpe and Skandalis for twisted C*-dynamical systems over weak Powers' groups. More recently in \cite{bedosconti}, B\'{e}dos and Conti obtained more general results for exact twisted C*-dynamical systems over $(\mr{P_{com}})$ and $PH$ groups. Specifically, they showed that if $G$ is a $(\mr{P_{com}})$ or $PH$ group and $A$ is a unital C*-algebra equipped with a twisted action $(\alpha,\sigma)$ of $G$, then there is a bijective correspondence between maximal closed ideals of the reduced crossed product $A \rtimes_{\alpha,r}^\sigma G$ and maximal $G$-invariant closed ideals of $A$.

In this paper we will show that these results hold for all twisted C*-dynamical systems over C*-simple groups. In particular, we will not require the dynamical system to be exact. Throughout, by a twisted C*-dynamical system we mean a quadruple $(A,G,\alpha,\sigma)$ consisting of a unital C*-algebra $A$, a discrete group $G$ and a twisted action $(\alpha,\sigma)$ of $G$ on $A$. The corresponding reduced crossed product is a C*-algebra that encodes the system (see Section \ref{preliminaries} for details). In particular, every C*-dynamical system can be viewed as a twisted C*-dynamical system, and our results are new even in this case.

\begin{theorem}
Let $(A,G,\alpha,\sigma)$ be a twisted C*-dynamical system over a C*-simple group $G$. Then there is a bijective correspondence between maximal $G$-invariant ideals of $A$ and maximal ideals of the reduced crossed product $A\rtimes_{\alpha,r}^\sigma G$.
\end{theorem}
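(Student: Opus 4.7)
The central tool is the Powers-type averaging property for reduced twisted crossed products over C*-simple groups, announced in the abstract and, I would assume, established elsewhere in the paper. Informally, it says: for any $x$ in the crossed product $A \rtimes_{\alpha,r}^\sigma G$, the value of the canonical conditional expectation $E(x) \in A$ is approximated in norm by convex combinations of conjugates $u_g x u_g^*$, where $\{u_g\}_{g\in G}$ are the canonical unitaries implementing the twisted action. With this in hand, the strategy follows the untwisted template of Breuillard--Kalantar--Kennedy--Ozawa \cite{kalantarkennedyozbr}, while carefully tracking how the cocycle $\sigma$ enters the multiplication rules $u_g u_h = \sigma(g,h) u_{gh}$ and hence the conjugation formulas.

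The proposed bijection is $I \mapsto I \cap A$, with candidate inverse $J \mapsto \ker \pi_J$, where $\pi_J : A \rtimes_{\alpha,r}^\sigma G \twoheadrightarrow (A/J) \rtimes_{\bar\alpha,r}^{\bar\sigma} G$ is the canonical surjection of reduced twisted crossed products induced by the $G$-equivariant quotient $A \to A/J$. This map exists at the reduced level for any $G$-invariant ideal $J$ with no exactness hypothesis, because the left regular representation of $G$ descends; one has $\ker \pi_J \cap A = J$ by construction, and $I \cap A$ is automatically $G$-invariant since $u_g(I \cap A)u_g^* \subseteq I \cap A$.

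The core verification is that every ideal $I$ of $A \rtimes_{\alpha,r}^\sigma G$ satisfies the intersection property $I = \ker \pi_{I \cap A}$. This is where averaging is essential: given $x \in I$, each conjugate $u_g x u_g^*$ remains in $I$, so any norm limit of convex combinations of such conjugates stays in $I$; applying this observation to the shifted elements $xu_g^*$ forces all ``Fourier coefficients'' of $x$ (the elements $E(xu_g^*) \in A$) into $I \cap A$, whence $\pi_{I\cap A}(x) = 0$. In particular, $I \neq 0$ implies $I \cap A \neq 0$ by faithfulness of $E$ on positives, and for a maximal $G$-invariant ideal $J$ the same averaging argument applied inside the quotient system shows $(A/J) \rtimes_{\bar\alpha,r}^{\bar\sigma} G$ is simple, so $\ker \pi_J$ is a maximal ideal. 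The advertised bijection on maximal objects then drops out of the resulting inclusion-preserving lattice isomorphism $J \leftrightarrow \ker \pi_J$.

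The principal obstacle, in my view, is handling the twist carefully in the averaging step: conjugation by $u_g$ no longer permutes the set $\{u_h\}$ cleanly but introduces scalar cocycle corrections, and the Fourier extraction $x \mapsto E(xu_g^*)$ must be compatible with these. A secondary technical point is the surjectivity of $\pi_J$ at the reduced level, which I would obtain by constructing an explicit covariant representation of the quotient twisted system on a suitable Hilbert-space quotient of the one defining the reduced crossed product of $A$.
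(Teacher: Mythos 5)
Your overall architecture is the right one in outline: the maps $I \mapsto I \cap A$ and $Y \mapsto \ker\pi_Y$ (using the kernel of the induced quotient map, written $Y\bar\rtimes_{\alpha,r}^\sigma G$ in the paper, rather than the ideal generated by $Y$, precisely so that no exactness hypothesis is needed) are exactly the maps the paper uses, and reducing one direction to simplicity of $(A/Y)\rtimes_{\dot\alpha,r}^{\dot\sigma}G$ for $G$-simple $A/Y$ is also how the paper proceeds. However, your ``core verification'' --- that \emph{every} ideal $I$ of $A\rtimes_{\alpha,r}^\sigma G$ satisfies $I=\ker\pi_{I\cap A}$ --- is false, and the paper itself records the counterexample at the start of Section 4: de la Harpe and Skandalis constructed C*-dynamical systems over Powers groups (which are C*-simple) whose reduced crossed products have many non-trivial ideals while $A$ has only a single non-trivial invariant ideal. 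If your intersection property held for all ideals, then $I\mapsto I\cap A$ would be injective on the full ideal lattice, which it manifestly is not in those examples. The correspondence in the theorem genuinely holds only between \emph{maximal} objects, and maximality must be used somewhere; your proposed ``inclusion-preserving lattice isomorphism'' does not exist.

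The error enters at the averaging step. Powers' averaging property gives $0\in\ov{\conv}\{\lambda_\sigma(g)b\lambda_\sigma(g)^*\mid g\in G\}$ when $E(b)=0$; applied to $y=x\lambda_\sigma(h)^*\in I$ it shows only that elements of $I$ come arbitrarily close to the set $\conv\{\alpha_g(E(y))\mid g\in G\}$, \emph{not} to $E(y)$ itself. Unlike the group C*-algebra case, where $E$ takes scalar values fixed by conjugation, here $E(y)\in A$ is moved by the action, so you cannot conclude $E(x\lambda_\sigma(h)^*)\in I\cap A$. For the same reason, ``$I\neq 0$ implies $I\cap A\neq 0$ by faithfulness of $E$ on positives'' is unjustified: faithfulness gives $E(x^*x)\neq 0$ but nothing places $E(x^*x)$ in $I$. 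The paper circumvents all of this by passing to the natural extension $(A\otimes C(\partial_FG),G,\beta,\tau)$, where freeness of the boundary action (equivalent to C*-simplicity of $G$) is exploited twice: first to show that the ideal of $(A\otimes C(\partial_FG))\rtimes_{\beta,r}^\tau G$ generated by a proper ideal of $A\rtimes_{\alpha,r}^\sigma G$ remains proper, and second to establish the sandwich $J_A\rtimes_{\beta,r}^\tau G\subset J\subset J_A\bar\rtimes_{\beta,r}^\tau G$ for ideals $J$ of the \emph{extended} crossed product, with $J_A=J\cap(A\otimes C(\partial_FG))$. Pulling these back along the inclusion $A\rtimes_{\alpha,r}^\sigma G\subset(A\otimes C(\partial_FG))\rtimes_{\beta,r}^\tau G$ yields only the inclusion $I\subset(I\cap A)\bar\rtimes_{\alpha,r}^\sigma G$, and it is the maximality of $I$ (together with properness of $I\cap A$) that upgrades this to equality. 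To repair your argument you would need either to restrict the intersection-property claim to maximal ideals and prove it there, or to work in the boundary extension as the paper does; as written, the central step contradicts the de la Harpe--Skandalis examples.
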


The previous result immediately implies the following corollary.

\begin{cor}
Let $(A,G,\alpha,\sigma)$ be a twisted C*-dynamical system over a C*-simple group $G$. Then the reduced crossed product $A\rtimes_{\alpha,r}^\sigma G$ is simple if and only if $A$ has no proper non-trivial $G$-invariant ideals.
\end{cor}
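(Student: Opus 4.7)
The plan is to deduce the corollary directly from the preceding theorem via a short translation into statements about maximal (invariant) ideals. By Zorn's lemma applied within the respective ideal lattices, a unital C*-algebra $B$ is simple if and only if $\{0\}$ is the unique maximal ideal of $B$, and $A$ has no proper non-trivial $G$-invariant ideals if and only if $\{0\}$ is the unique maximal $G$-invariant ideal of $A$. Applying these equivalences to $B := A\rtimes_{\alpha,r}^\sigma G$ rephrases the corollary as the statement that $\{0\}$ is the unique maximal ideal of $B$ if and only if $\{0\}$ is the unique maximal $G$-invariant ideal of $A$.

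With this rephrasing in hand, the bijection provided by the theorem does almost all of the work: since it pairs maximal ideals of $B$ one-to-one with maximal $G$-invariant ideals of $A$, the set on one side is a singleton precisely when the set on the other side is. To finish it suffices to verify that the bijection matches $\{0\}\subset B$ with $\{0\}\subset A$. This is standard for any naturally defined correspondence and will be visible from the construction used in the proof of the theorem; since the canonical inclusion $A\hookrightarrow B$ is injective, any reasonable association such as $I\mapsto I\cap A$ will send the zero ideal of $B$ to the zero ideal of $A$.

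The only (mild) obstacle is thus confirming that the zero ideals correspond under the bijection, and this amounts to no more than inspection of the construction given in the proof of the theorem. Beyond that point, the corollary is immediate.
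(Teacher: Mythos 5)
Your proof is correct and takes essentially the same route as the paper, which derives the corollary immediately from the bijective correspondence between maximal ideals of $A\rtimes_{\alpha,r}^\sigma G$ and maximal $G$-invariant ideals of $A$. The two verifications you flag --- that Zorn's lemma (using unitality) reduces simplicity, resp.\ $G$-simplicity, to $\{0\}$ being the unique maximal, resp.\ maximal $G$-invariant, ideal, and that the correspondence $I\mapsto I\cap A$, $Y\mapsto Y\bar{\rtimes}_{\alpha,r}^\sigma G$ matches $\{0\}$ with $\{0\}$ --- are exactly the routine checks the paper leaves implicit.
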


Our results on twisted C*-dynamical systems have some interesting applications. For example, we provide another proof of the fact that the class of C*-simple groups is stable under extensions, originally proved in \cite{kalantarkennedyozbr}. In the same paper, it was shown that for an arbitrary discrete group $G$, every tracial state on the reduced C*-algebra of $G$ is supported on the amenable radical $R_a(G)$ of $G$. It follows that if $R_a(G)$ is trivial, and in particular if $G$ is C*-simple, then the reduced C*-algebra has a unique tracial state. We obtain an analogous result for tracial states on reduced crossed products of twisted C*-dynamical systems.

\begin{theorem}
Let $(A,G,\alpha,\sigma)$ be a twisted C*-dynamical system over a discrete group $G$ with amenable radical $R_a(G)$. For every tracial state $\tau$ on the reduced crossed product $A \rtimes_{\alpha,r}^\sigma G$, $\tau = \tau \circ E_{R_a(G)}$, where $E_{R_a(G)}$ denotes the canonical conditional expectation from $A \rtimes_{\alpha,r}^\sigma G$ to $A \rtimes_{\alpha,r}^\sigma R_a(G)$.
\end{theorem}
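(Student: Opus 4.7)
Write $N = R_a(G)$, $C = A \rtimes_{\alpha,r}^\sigma N$, $B = A \rtimes_{\alpha,r}^\sigma G$, and $E = E_N \colon B \to C$. Since $\ker E$ is the closed linear span of $\{a\lambda_g^\sigma : a \in A,\ g \in G\setminus N\}$, the statement reduces to proving $\tau(a\lambda_g^\sigma) = 0$ for every $a \in A$ and $g \in G\setminus N$. My plan is to adapt the strategy of Kalantar, Kennedy, Ozawa, and Breuillard from \cite{kalantarkennedyozbr}, where the analogous statement is established for tracial states on $C^*_r(G)$, to the present twisted crossed product.

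Let $Q = G/N$ and let $\partial_F Q$ denote its Furstenberg boundary. Since $Q$ has trivial amenable radical, the $Q$-action on $\partial_F Q$ is faithful, minimal, strongly proximal, and every point stabilizer contains no non-trivial amenable subgroup. Pulling this action back along the quotient $G \to Q$ gives a $G$-action $\beta$ on $C(\partial_F Q)$ with $N \subseteq \ker \beta$. I would then form the inflated twisted dynamical system $(A\otimes C(\partial_F Q),\, G,\, \alpha\otimes \beta,\, \sigma\otimes 1)$ and its reduced crossed product $\widetilde B$, into which $B$ embeds unitally via $a \mapsto a\otimes 1$. Using the $G$-injectivity of $C(\partial_F Q)$ as a $G$-operator system from \cite{kalantarkennedy}, one extends $\tau$ to a state $\widetilde\tau$ on $\widetilde B$. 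Strong proximality then enables a boundary-averaging argument: for $g \in G\setminus N$, one chooses $h_i \in G$ and weights $c_i > 0$ with $\sum c_i = 1$ so that the convex combination of conjugates $\sum_i c_i \lambda_{h_i}^\sigma (a\otimes 1)\lambda_g^\sigma (\lambda_{h_i}^\sigma)^*$ in $\widetilde B$ becomes arbitrarily small in norm, because the images of $h_i g h_i^{-1}$ in $Q$ disperse across the boundary. Applying $\widetilde\tau$, which restricts to $\tau$ on $B$ and is invariant under conjugation by $\lambda_{h_i}^\sigma$ because $\tau$ is tracial, forces $\tau(a\lambda_g^\sigma) = 0$.

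The main obstacle I anticipate is handling the cocycle $\sigma$ throughout this argument. In the untwisted case, conjugation by $\lambda_h$ sends $\lambda_g$ cleanly to $\lambda_{hgh^{-1}}$, whereas in the twisted case it produces an additional $A$-valued unitary correction coming from $\sigma$. These correction factors do not affect norm estimates, which is what drives the averaging, but they do complicate the construction of the extension $\widetilde\tau$ and the verification that the boundary-theoretic cancellation actually occurs inside $\widetilde B$. Separating the untwisted boundary action $\beta$ on $C(\partial_F Q)$ from the twist $\sigma$ on $A$ should keep these two ingredients from interfering: the twist should be absorbed into the unitary multipliers that the averaging makes small, while the boundary dynamics controls the cancellation coming from distinct orbits of conjugation in $Q$. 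Verifying that the extension and the averaging interact correctly with the cocycle is, I expect, where the bulk of the technical work will lie.
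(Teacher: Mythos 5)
Your reduction to showing $\tau(a\lambda_\sigma(g))=0$ for $g\notin R_a(G)$ is correct and matches the paper's first step, and your use of traciality to obtain conjugation-invariance of the extended state is also the right instinct. But the engine of your argument --- finding weights $c_i$ and elements $h_i$ so that $\bigl\|\sum_i c_i\,\lambda_\sigma(h_i)\,a\lambda_\sigma(g)\,\lambda_\sigma(h_i)^*\bigr\|$ is arbitrarily small --- is a Powers-type \emph{norm} averaging, and this is strictly stronger than what is available here. Note that all the conjugates $\lambda_\sigma(h_i)a\lambda_\sigma(g)\lambda_\sigma(h_i)^*$ already lie in $B=A\rtimes_{\alpha,r}^\sigma G$, and $B$ embeds isometrically in your $\widetilde B$, so passing to the inflated algebra buys you nothing: you are asserting $0\in\ov{\conv}\{\operatorname{Ad}(\lambda_\sigma(h))(a\lambda_\sigma(g)) : h\in G\}$ inside $B$ itself. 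Already for $A=\CC$ and trivial cocycle this averaging property (even for a single $g\neq 1$) characterizes C*-simplicity of $G$ by the Haagerup--Kennedy results cited in the introduction, and there are groups (e.g.\ Le Boudec's examples) with trivial amenable radical that are not C*-simple. Since the theorem is stated for \emph{arbitrary} discrete $G$, your key step fails in exactly the regime where the theorem has content beyond the C*-simple case. (A side remark: your claim that point stabilizers for the $Q$-action on $\partial_F Q$ ``contain no non-trivial amenable subgroup'' is not a correct fact and is not what is needed; the relevant fact is only that the kernel of the action is the amenable radical.)

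The paper's proof avoids norm estimates entirely. Using Lemma \ref{lemmaconv} (strong proximality plus a weak$^*$ compactness argument) together with traciality of $\tau$, one extends $\tau$ to a state $\psi$ on $(A\otimes C(\partial_FG))\rtimes_{\beta,r}^\tau G$ with $\psi|_{A\rtimes_{\alpha,r}^\sigma G}=\tau$ and $\psi|_{C(\partial_FG)}=\delta_x$ a point mass; then $C(\partial_FG)$ lies in the multiplicative domain of $\psi$. By Furman's result, $g\notin R_a(G)$ moves some point $y\in\partial_FG$, and minimality transports $x$ to $y$ (again preserving $\tau$ on $B$ by traciality). Choosing $f\in C(\partial_FG)$ with $f(y)=1$ and $f(g^{-1}y)=0$, the multiplicative domain computation gives $\tau(a\lambda_\sigma(g))=f(g^{-1}y)\,\tau(a\lambda_\sigma(g))=0$. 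This is a ``point-evaluation'' cancellation, not an averaging, and it is why the hypothesis can be weakened from C*-simplicity to an arbitrary group with the conclusion localized to $R_a(G)$. Your worry about the cocycle is, incidentally, a non-issue in this argument: the twist lives entirely in $A$, while the cancellation takes place in the untwisted commutative factor $C(\partial_FG)$.
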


The previous result immediately implies the following corollary.

\begin{cor}
Let $(A,G,\alpha,\sigma)$ be a twisted C*-dynamical system over a discrete group $G$. Suppose that $G$ is C*-simple or, more generally, that the amenable radical of $G$ is trivial. Then there is a bijective correspondence between tracial states on the reduced crossed product $A \rtimes_{\alpha,r}^\sigma G$ and $G$-invariant tracial states on $A$. Thus $A \rtimes_{\alpha,r}^\sigma G$ has a unique tracial state if and only if $A$ has a unique $G$-invariant tracial state.
\end{cor}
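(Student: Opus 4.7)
The plan is to deduce this corollary directly from the preceding theorem. Under the hypothesis that $R_a(G)$ is trivial (and recall that C*-simple groups have trivial amenable radical, by a result of Breuillard--Kalantar--Kennedy--Ozawa cited earlier), the conditional expectation $E_{R_a(G)}$ coincides with the canonical faithful conditional expectation $E : A \rtimes_{\alpha,r}^\sigma G \to A$. Hence the previous theorem says that every tracial state $\tau$ on $A \rtimes_{\alpha,r}^\sigma G$ satisfies $\tau = \tau \circ E$, so $\tau$ is entirely determined by its restriction $\tau|_A$.

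I would then define the candidate bijection by the restriction map $\Phi(\tau) := \tau|_A$ from tracial states on $A\rtimes_{\alpha,r}^\sigma G$ to $G$-invariant tracial states on $A$. Well-definedness is straightforward: $\tau|_A$ is clearly a tracial state on $A$, and $G$-invariance follows from $\tau(\alpha_g(a)) = \tau(u_g a u_g^*) = \tau(a)$, using that the implementing elements $u_g$ in the reduced twisted crossed product are unitaries and the trace property of $\tau$. Injectivity is immediate from the formula $\tau = \tau \circ E$: if two tracial states agree on $A$, they agree after composing with $E$, hence everywhere.

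For surjectivity, given a $G$-invariant tracial state $\phi$ on $A$, I would set $\tau := \phi \circ E$ and verify it is a tracial state. Positivity and normalization are clear, so the point is to check $\tau(xy) = \tau(yx)$ on the dense $*$-subalgebra spanned by monomials $a u_g$. A direct computation gives $\tau(a u_g \cdot b u_h) = \delta_{h,g^{-1}}\, \phi\bigl(a\,\alpha_g(b)\,\sigma(g,g^{-1})\bigr)$ and $\tau(b u_h \cdot a u_g) = \delta_{h,g^{-1}}\, \phi\bigl(b\,\alpha_{g^{-1}}(a)\,\sigma(g^{-1},g)\bigr)$. Applying the $G$-invariance $\phi = \phi \circ \alpha_{g^{-1}}$ to the first expression, together with the normalized cocycle identity $\alpha_{g^{-1}}(\sigma(g,g^{-1})) = \sigma(g^{-1},g)$ (a consequence of the 2-cocycle relation with $(h,k) = (g^{-1},g)$), and then the trace property of $\phi$ on $A$, converts one expression into the other. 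A standard density/continuity argument extends the tracial identity from the algebraic subalgebra to all of $A\rtimes_{\alpha,r}^\sigma G$. Since $\tau|_A = \phi$, this proves surjectivity, and the final uniqueness statement is an immediate consequence of the bijection.

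The main technical obstacle is the surjectivity step: in the twisted setting, the 2-cocycle $\sigma$ introduces additional factors that must be absorbed correctly, and the interplay between $G$-invariance of $\phi$, the trace property of $\phi$ on $A$, and the cocycle identities needs to be arranged carefully. Assuming the standard conventions used in Section \ref{preliminaries} (normalized cocycle taking values in the unitaries of $A$ commuting appropriately with the action), this is a routine but nontrivial calculation; the substantive content of the corollary really lies in the tracial-state theorem that precedes it.
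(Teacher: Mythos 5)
Your proposal is correct and follows essentially the same route as the paper: deduce $\tau = \tau\circ E$ from the preceding theorem (since a trivial amenable radical makes $E_{R_a(G)}$ the canonical expectation onto $A$), get injectivity of restriction from this, and get surjectivity from the fact that $\phi\circ E$ is tracial for every $G$-invariant tracial state $\phi$ on $A$. The only difference is that the paper cites this last fact as a well-known lemma with proof omitted, whereas you carry out the cocycle computation (correctly, including the identity $\alpha_{g^{-1}}(\sigma(g,g^{-1}))=\sigma(g^{-1},g)$) explicitly.
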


The paper is arranged as follows. In Section 2, we establish preliminaries on twisted C*-dynamical systems as well as the needed knowledge on the Furstenberg boundary. Section 3 is dedicated to finding conditions for reduced twisted crossed products to have Powers' averaging property (a condition stronger than the well-known Dixmier property). In Section 4 we determine the maximal ideal structure of a reduced twisted crossed product when the underlying group is C*-simple. Lastly, in Section 5 we find a correspondence between $G$-invariant tracial states on the C*-algebra $A$ in a twisted C*-dynamical system $(A,G,\alpha,\sigma)$ and tracial states on the reduced twisted crossed product $A\rtimes_{\alpha,r}^\sigma G$, whenever $G$ has the unique trace property.

We will employ standard notation. If $G$ is a group, the neutral element of $G$ will be denoted by $1$. If $A$ is a unital C*-algebra, then $\mU(A)$ will denote the unitary group of $A$ and $\Aut(A)$ is the group of all $^*$-automorphisms of $A$. For any Hilbert space $H$, $B(H)$ will denote the C*-algebra of bounded linear operators on $H$.

\section{Preliminaries}\label{preliminaries}
\subsection{Twisted C*-dynamical systems}

Throughout this paper, we will consider twisted C*-dynamical systems $(A,G,\alpha,\sigma)$. This means that $A$ is a unital C*-algebra with unit $1$, $G$ is a discrete group and $\alpha\colon G\to\Aut( A)$ and $\sigma\colon G\times G\to\mU( A)$ are maps satisfying the identities
\begin{align*}\alpha_r\circ\alpha_s&=\mr{Ad}(\sigma(r,s))\circ\alpha_{rs},\\\sigma(r,s)\sigma(rs,t)&=\alpha_r(\sigma(s,t))\sigma(r,st),\\\sigma(s,1)&=\sigma(1,s)=1,\end{align*} for all $r,s,t\in G$. Here $\alpha_r=\alpha(r)\in\Aut(A)$ for all $r\in G$, and $\mr{Ad}(v)\in\Aut(A)$ is the inner automorphism of $A$ given by $\mr{Ad}(v)(a)=vav^*$ for all $a\in A$, where $v\in\mU(A)$. The tuple $(\alpha,\sigma)$ is referred to as a \emph{twisted action} of $G$ on $A$, and $\sigma$ is called the \emph{normalized $2$-cocycle} (or just the \emph{cocycle}) of the twisted action. Whenever $A=\CC$, then $\alpha$ is the trivial map and we say that $\sigma\colon G\times G\to\TT$ is a \emph{multiplier}. If $\sigma$ is \emph{trivial}, i.e., $\sigma(r,s)=1$ for all $r,s\in G$, then $\alpha$ is a group homomorphism and $(A,G,\alpha)$ is a C*-dynamical system in the usual sense.

Assuming that $A$ is faithfully represented on some Hilbert space $H$, we may realize the \emph{reduced twisted crossed product} $A\rtimes_{\alpha,r}^\sigma G$ (cf. \cite{packerraeburn}, \cite{quigg}, \cite{zellermeier}) as the C*-subalgebra of $B( H\otimes\ell^2(G))$ generated by the set of operators $\{\pi_\alpha(a),\,\lambda_\sigma(g)\,|\,a\in A,\,g\in G\}$ where $\pi_\alpha\colon A\to B(H\otimes\ell^2(G))$ is the faithful representation of $A$ given by $$\pi_\alpha(a)(\xi\otimes\delta_t)=\alpha_{t^{-1}}(a)\xi\otimes\delta_t,\quad a\in A,\ \xi\in H,\ t\in G,$$ and $\lambda_\sigma(g)\in\mU(H\otimes\ell^2(G))$ is a unitary operator for $g\in G$ given by
$$\lambda_\sigma(g)(\xi\otimes\delta_t)=\sigma(t^{-1}g^{-1},g)\xi\otimes\delta_{gt},\quad t\in G,\ \xi\in H.$$ Moreover, it is well-known that $A\rtimes_{\alpha,r}^\sigma G$ does not depend on the choice of faithful representation $ A\subset B( H)$ \cite[p. 552]{quigg}. Henceforth, we will identify $\pi_\alpha(A)$ with $A$.

Straightforward computation shows that the maps $\pi_\alpha$ and $\lambda_\sigma$ define a \emph{covariant representation} \cite[p. 135]{zellermeier} of the twisted C*-dynamical system $(A,G,\alpha,\sigma)$, i.e., the identities
\begin{align*}\alpha_s(a)=\lambda_\sigma(s)a\lambda_\sigma(s)^*,\quad\lambda_\sigma(s)\lambda_\sigma(t)=\sigma(s,t)\lambda_\sigma(st)\end{align*} hold for all $a\in A$ and $s,t\in G$. It follows that the conjugation map $\alpha'(s)=\mr{Ad}(\lambda_\sigma(s))$ maps $A\rtimes_{\alpha,r}^\sigma G$ onto itself for all $s\in G$. Viewing $\sigma(r,s)$ as a unitary operator in $A\rtimes_{\alpha,r}^\sigma G$ for $r,s\in G$, we may then note that $(A\rtimes_{\alpha,r}^\sigma G,G,\alpha',\sigma)$ is a twisted C*-dynamical system such that the inclusion $A\to A\rtimes_{\alpha,r}^\sigma G$ is $G$-equivariant, i.e., $\alpha'_s|_A=\alpha_s$ for all $s\in G$.

For any twisted C*-dynamical system $(A,G,\alpha,\sigma)$ there exists a faithful conditional expectation $E$ of $A\rtimes_{\alpha,r}^\sigma G$ onto $A$ such that $E(\lambda_\sigma(s))=0$ for all $g\in G\sm\{1\}$ (cf. \cite[Theorem 2.2]{bedosart}, \cite[Th\'{e}or\`{e}me 4.12]{zellermeier}). Moreover, $E$ is $G$-equivariant, i.e., $$E(\lambda_\sigma(g)x\lambda_\sigma(g)^*)=\alpha_g(E(x)),\quad g\in G,\ x\in A\rtimes_{\alpha,r}^\sigma G.$$ Henceforth, $E$ is referred to as the \emph{canonical conditional expectation} of $A\rtimes_{\alpha,r}^\sigma G$ onto $A$.

A particularly good reason for working with twisted C*-dynamical systems is that their crossed products can be realized as iterated crossed products with respect to a normal subgroup and the corresponding quotient group. A proof of the following result can be found in \cite[Theorem 2.1]{bedosart} (for the case of full twisted crossed products, see \cite[Theorem 4.1]{packerraeburn}).
\begin{theorem}\label{rcpstructure}Let $(A,G,\alpha,\sigma)$ be a twisted C*-dynamical system and let $N$ be a normal subgroup of $G$. If we denote the restriction of the twisted action $(\alpha,\sigma)$ to $N$ also by $(\alpha,\sigma)$, there exists a twisted action $(\gamma,\nu)$ of the quotient group $G/N$ on the reduced twisted crossed product $A\rtimes_{\alpha,r}^\sigma N$ such that
$$A\rtimes_{\alpha,r}^\sigma G\cong (A\rtimes_{\alpha,r}^\sigma N)\rtimes_{\gamma,r}^\nu(G/N).$$
\end{theorem}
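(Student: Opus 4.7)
The plan is to construct the twisted action $(\gamma,\nu)$ explicitly using a set-theoretic section of the quotient map $G\to G/N$, verify the twisted action axioms, and then exhibit the isomorphism by rewriting the regular covariant representation of $(A,G,\alpha,\sigma)$ after the coset decomposition of $G$.

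First I fix a normalized section $c\colon G/N\to G$ with $c(\bar 1)=1$, so that every $g\in G$ factors uniquely as $g=nc(gN)$ with $n\in N$. Write $B=A\rtimes_{\alpha,r}^\sigma N$, and recall from the preliminaries that $(\alpha,\sigma)$ extends to a twisted action $(\alpha',\sigma)$ of $G$ on $A\rtimes_{\alpha,r}^\sigma G$ with $\alpha'_s=\mr{Ad}(\lambda_\sigma(s))$. Set $\gamma_{\bar s}:=\alpha'_{c(\bar s)}|_B$. Checking generators: $\gamma_{\bar s}(a)=\alpha_{c(\bar s)}(a)\in A\subseteq B$ for $a\in A$, and a short cocycle computation gives $\gamma_{\bar s}(\lambda_\sigma(n))=u\,\lambda_\sigma(c(\bar s)nc(\bar s)^{-1})$ for some $u\in\mU(A)$ and all $n\in N$, which lies in $B$ by normality of $N$. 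Then, with $n(\bar s,\bar t):=c(\bar s)c(\bar t)c(\bar s\bar t)^{-1}\in N$, the identity
$$\lambda_\sigma(c(\bar s))\lambda_\sigma(c(\bar t))=\nu(\bar s,\bar t)\lambda_\sigma(c(\bar s\bar t))$$
forces $\nu(\bar s,\bar t):=\sigma(c(\bar s),c(\bar t))\,\sigma(n(\bar s,\bar t),c(\bar s\bar t))^{*}\,\lambda_\sigma(n(\bar s,\bar t))\in\mU(B)$. The twisted action identities $\gamma_{\bar s}\gamma_{\bar t}=\mr{Ad}(\nu(\bar s,\bar t))\gamma_{\bar s\bar t}$, the cocycle identity for $\nu$, and the normalization $\nu(\bar s,\bar 1)=\nu(\bar 1,\bar s)=1$ then follow by unwinding these definitions and applying the corresponding identities for $(\alpha',\sigma)$.

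For the isomorphism I would use the unitary $V\colon\ell^2(N)\otimes\ell^2(G/N)\to\ell^2(G)$ determined by $V(\delta_n\otimes\delta_{\bar s})=\delta_{nc(\bar s)}$. Conjugating the standard faithful representation of $A\rtimes_{\alpha,r}^\sigma G$ on $H\otimes\ell^2(G)$ by $\idd_H\otimes V^{*}$, the generators $\pi_\alpha(A)$ and $\{\lambda_\sigma(n):n\in N\}$ are carried to the corresponding generators of $B\subseteq B(H\otimes\ell^2(N))$ acting on the first two tensor factors of $H\otimes\ell^2(N)\otimes\ell^2(G/N)$. A careful direct computation then shows that $\lambda_\sigma(c(\bar s))$ is carried to the image of the canonical generator $\lambda_\nu(\bar s)$ in the regular covariant representation of $(B,G/N,\gamma,\nu)$ on the same space. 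Since $A\rtimes_{\alpha,r}^\sigma G$ is generated by $\pi_\alpha(A)\cup\{\lambda_\sigma(n):n\in N\}\cup\{\lambda_\sigma(c(\bar s)):\bar s\in G/N\}$, this yields the desired $^{*}$-isomorphism onto $B\rtimes_{\gamma,r}^\nu(G/N)$.

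The main obstacle lies in this last spatial computation: one must expand $\lambda_\sigma(c(\bar s))$ on basis vectors $V(\delta_n\otimes\delta_{\bar t})=\delta_{nc(\bar t)}$, factor $c(\bar s)nc(\bar t)=n'c(\bar s\bar t)$ uniquely with $n'\in N$, and rearrange the resulting $\sigma$-factors so that they match the composition of $\gamma_{\bar s}(\lambda_\sigma(n))\,\nu(\bar s,\bar t)$ with the regular action of $\bar s\bar t$ dictated by the formulas above. This bookkeeping is routine but lengthy, and essentially constitutes the content of B\'edos' proof in \cite{bedosart}. A more conceptual alternative, if one wishes to avoid the explicit spatial argument, is to characterize both sides by a universal property involving covariant representations of $(A,G,\alpha,\sigma)$ together with a compatible faithful canonical conditional expectation onto $A$, and then invoke uniqueness.
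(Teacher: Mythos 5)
The paper does not prove this theorem itself; it quotes it from B\'edos \cite[Theorem 2.1]{bedosart} (and Packer--Raeburn for the full crossed product), so there is no internal proof to compare against and your argument has to stand on its own. Your construction of $(\gamma,\nu)$ does: $\gamma_{\bar s}=\mr{Ad}(\lambda_\sigma(c(\bar s)))|_B$ preserves $B$ by normality of $N$, your formula for $\nu(\bar s,\bar t)$ is exactly what the relation $\lambda_\sigma(c(\bar s))\lambda_\sigma(c(\bar t))=\nu(\bar s,\bar t)\lambda_\sigma(c(\bar s\bar t))$ forces, and the twisted-action axioms follow from associativity. That half is correct and standard.

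The gap is in the spatial step. With the paper's conventions, $\pi_\alpha(a)$ acts on the basis vector $\delta_{nc(\bar t)}=V(\delta_n\otimes\delta_{\bar t})$ by $\alpha_{c(\bar t)^{-1}n^{-1}}(a)$, whereas the image of $a$ in the regular representation of $(B,G/N,\gamma,\nu)$ acts on $\xi\otimes\delta_n\otimes\delta_{\bar t}$ by $\alpha_{n^{-1}}(\gamma_{\bar t^{-1}}(a))=\alpha_{n^{-1}}(\alpha_{c(\bar t^{-1})}(a))$, and ``$B$ acting on the first two tensor factors'' would act by $\alpha_{n^{-1}}(a)$. The relevant group elements $c(\bar t)^{-1}n^{-1}$, $n^{-1}c(\bar t^{-1})$ and $n^{-1}$ differ by left multiplication by elements of $N$ whose $\alpha$'s are not inner in general, so the discrepancy is not a rearrangement of $\sigma$-factors and cannot be absorbed into a fibrewise $\mU(A)$-valued correction of $V$: your $\idd_H\otimes V^*$ simply does not carry $\pi_\alpha(A)$ (nor $\lambda_\sigma(N)$) onto the copy of $B$ sitting inside $B\rtimes_{\gamma,r}^\nu(G/N)$. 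Already for $A=\CC$ and $\sigma$ trivial one has $V^*\lambda(m)V=\lambda_N(m)\otimes 1$ for $m\in N$, while the iterated crossed product represents $\lambda_N(m)$ by the block-diagonal operator with fibre $\lambda_N(c(\bar t^{-1})mc(\bar t^{-1})^{-1})$ at $\bar t$; these agree only up to a further block-diagonal unitary implementing $\gamma_{\bar t^{-1}}$ fibrewise, and producing one unitary that simultaneously corrects $\pi_\alpha(A)$, $\lambda_\sigma(N)$ and $\lambda_\sigma(c(\bar s))$ is precisely the non-trivial content you have deferred. The ``conceptual alternative'' you mention in passing is in fact the sound way to finish, and is essentially how the cited sources argue: $A\rtimes_{\alpha,r}^\sigma G$ is generated by a covariant representation of $(B,G/N,\gamma,\nu)$, the canonical faithful conditional expectation $E_N\colon A\rtimes_{\alpha,r}^\sigma G\to B$ kills $b\lambda_\sigma(c(\bar s))$ for $\bar s\neq\bar 1$, and a standard faithfulness argument identifies the resulting quotient of the full crossed product with the reduced one. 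I would promote that to the main proof and drop the claimed intertwining by $\idd_H\otimes V$.
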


\subsection{Boundary actions.} Let $G$ be a discrete group. We say that a compact Hausdorff space $X$ is a \emph{$G$-space} if $G$ acts by homeomorphisms on $X$. In this case, we will write $gx$ for the image of $x$ under $g$ for any $x\in X$ and $g\in G$.

The space of probability measures on a compact Hausdorff space $X$ will always be denoted by $\mP(X)$. Note that $\mP(X)$ can be identified with the state space of the C*-algebra $C(X)$ of continuous complex-valued functions on $X$. If $X$ is a $G$-space, there is a canonical $G$-action on $C(X)$ given by
$$(sf)(x)=f(s^{-1}x),\quad s\in G,\ f\in C(X),\ x\in X.$$
Note that if we define $L\colon G\to\mr{Aut}(C(X))$ by $L_sf=sf$ for $s\in G$ and $f\in C(X)$, then $(C(X),G,L)$ is a C*-dynamical system. For any C*-dynamical system $(A,G,\alpha)$ we may define a $G$-action on a weak$^*$-compact $G$-invariant subset $X$ of $A^*$ by
$$(s\phi)(a)=\phi(s^{-1}a),\quad s\in G,\ \phi\in A^*,\ a\in A,$$
so that $X$ is a $G$-space. In this way, the space $\mP(X)$ becomes a $G$-space.

The action of $G$ on a $G$-space $X$ is said to be \emph{minimal} if every $G$-orbit in $X$ is dense in $X$, and \emph{strongly proximal} if it holds for any $\mu\in\mP(X)$ that the weak$^*$-closure of the $G$-orbit $G\mu$ contains a point mass $\delta_x$ for some $x\in X$. A $G$-space for which the $G$-action is minimal and strongly proximal is called a \emph{$G$-boundary}. There always exists a unique $G$-boundary $\partial_FG$ called the \emph{Furstenberg boundary} \cite[Section 4]{furstenberg} which is universal in the sense that for any $G$-boundary $X$, there exists a surjective $G$-equivariant continuous map $\partial_FG\to X$.

Let $X$ be a $G$-boundary. If $(A,G,\alpha,\sigma)$ is a twisted C*-dynamical system, we will frequently consider the twisted C*-dynamical system $(A \otimes C(X), G, \beta, \tau)$ obtained by defining $\beta\colon G\to\Aut(A\otimes C(\partial_FG))$ and $\tau \colon G\times G\to\mU(A\otimes C(X))$ by $$\beta_s(a\otimes f)=\alpha_s(a)\otimes(sf),\quad \tau(r,s)=\sigma(r,s)\otimes 1$$ for $r,s\in G$, $a\in A$ and $f\in C(X)$. The inclusion map $A\to A\otimes C(X)$ given by $a\mapsto a\otimes 1$ allows us to identify $A\rtimes_{\alpha,r}^\sigma G$ as a unital $G$-invariant C*-subalgebra of $(A \otimes C(X)) \rtimes_{\beta,r}^\tau G$, so that the unitaries $\lambda_\sigma(g)$ and $\lambda_\tau(g)$ are identified in $(A \otimes C(X)) \rtimes_{\beta,r}^\tau G$ for all $g\in G$. We will refer to $(A \otimes C(X), G, \beta, \tau)$ as a {\em natural extension} of $(A, G, \alpha, \sigma)$.

The following lemma will be useful in the subsequent sections.

\begin{lemma}\label{lemmaconv} Let $(A,G,\alpha,\sigma)$ be a twisted C*-dynamical system and let $X$ be a $G$-boundary. The natural extension $(A \otimes C(X), G, \beta, \tau)$ has the following property: For any states $\phi_1,\ldots,\phi_n$ on the reduced crossed product $A\rtimes_{\alpha,r}^{\sigma} G$ and any point $x \in X$, there exist states $\psi_1,\ldots,\psi_n$ on the reduced crossed product $(A\otimes C(X))\rtimes_{\beta,r}^{\tau} G$ and a net $(g_j)$ in $G$ such that
\[
\psi_i|_{A\rtimes_{\alpha,r}^{\tau} G} = \lim_j \phi_i \circ \operatorname{Ad}(\lambda_\sigma(g_j)) \quad \text{and} \quad \psi_i|_{C(X)}=\delta_x, \quad 1 \leq i \leq n,
\]
the limit being taken in the weak$^*$ topology.\end{lemma}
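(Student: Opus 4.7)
The goal is to construct states $\psi_i$ on the larger crossed product $B := (A\otimes C(X))\rtimes_{\beta,r}^\tau G$ that restrict to $\delta_x$ on $C(X)$ while realizing weak-$^*$ limits of conjugates of the $\phi_i$ on $A\rtimes_{\alpha,r}^\sigma G$. My strategy is first to extend each $\phi_i$ to $B$ by Hahn--Banach, producing states $\tilde\phi_i$ with restrictions $\mu_i := \tilde\phi_i|_{1\otimes C(X)} \in \mP(X)$, and then to conjugate by a common net $(g_j)\subset G$ chosen so that the $C(X)$-restriction is driven onto $\delta_x$.

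The choice of net uses the $G$-boundary hypothesis. Set $\mu := \tfrac{1}{n}(\mu_1+\cdots+\mu_n) \in \mP(X)$. Strong proximality provides a net $(h_k)\subset G$ and a point $y\in X$ with $h_k\mu\to\delta_y$ weak-$^*$, and minimality of the $G$-action shows the $G$-orbit of $\delta_y$ is weak-$^*$ dense among point masses; composing, we obtain a net $(g_j)\subset G$ with $g_j^{-1}\mu\to\delta_x$ weak-$^*$. Because $0\le\mu_i\le n\mu$ for each $i$, any weak-$^*$ cluster point $\nu_i$ of $(g_j^{-1}\mu_i)$ satisfies $0\le\nu_i\le n\delta_x$ and has total mass $1$, forcing $\nu_i=\delta_x$; thus $g_j^{-1}\mu_i\to\delta_x$ for every $i$ simultaneously. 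By weak-$^*$ compactness of the state space of $B$, pass to a subnet (still denoted $(g_j)$) along which each $\tilde\phi_i\circ\mr{Ad}(\lambda_\tau(g_j))$ converges to a state $\psi_i$ on $B$.

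To verify the two required restrictions: for $a\in A\rtimes_{\alpha,r}^\sigma G$, the identification $\lambda_\tau(g)=\lambda_\sigma(g)$ together with $G$-invariance of the inclusion $A\rtimes_{\alpha,r}^\sigma G\hookrightarrow B$ yields $\mr{Ad}(\lambda_\tau(g_j))(a)=\mr{Ad}(\lambda_\sigma(g_j))(a)$, hence $\psi_i(a)=\lim_j\phi_i(\mr{Ad}(\lambda_\sigma(g_j))(a))$. For $f\in C(X)$, one has $\mr{Ad}(\lambda_\tau(g_j))(1\otimes f)=1\otimes g_j f$, so $\psi_i(1\otimes f)=\lim_j\int(g_j f)\,d\mu_i=\lim_j(g_j^{-1}\mu_i)(f)=f(x)$, giving $\psi_i|_{C(X)}=\delta_x$. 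The main technical obstacle is arranging a single net $(g_j)$ that works for all $n$ states simultaneously; the convex combination $\mu=\tfrac{1}{n}\sum\mu_i$ resolves this by letting positivity force every cluster point of each $g_j^{-1}\mu_i$ to be concentrated at $x$.
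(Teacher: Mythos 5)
Your proposal is correct and follows essentially the same route as the paper: extend the $\phi_i$ to the larger crossed product, average the resulting measures on $C(X)$, use strong proximality together with minimality to drive the average to $\delta_x$ along a single net, and pass to a weak$^*$-convergent subnet of the conjugated states. The only cosmetic differences are that you justify simultaneous convergence of the $g_j^{-1}\mu_i$ by domination ($\mu_i\le n\mu$) where the paper invokes extremality of $\delta_x$ in $\mP(X)$, and that you spell out the minimality step the paper leaves implicit.
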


\begin{proof}
Extend the states $\phi_1,\ldots,\phi_n$ on $A\rtimes_{\alpha,r}^\sigma G$ to states $\hat{\phi}_1,\ldots,\hat{\phi}_n$ respectively on $(A \otimes C(X)) \rtimes_{\beta,r}^\tau G$. For each $i$, let $\mu_i$ denote the probability measure obtained by restricting $\hat{\phi}_i$ to $C(X)$, and let $\mu = \frac{1}{n} \sum_{i = 1}^n \mu_i$.

The twisted action $(\beta,\tau)$ restricted to $C(X)$ is the simply the $G$-action on $C(X)$, so strong proximality yields a net $(g_j)$ in $G$ such that $g_j\mu\to\delta_x$ in the weak$^*$ topology. By compactness, we may assume that $(g_j\mu_i)$ converges for all $i$, in which case we have $g_j \mu_i \to \delta_x$ for each $i$ as $\delta_x$ is an extreme point of $\mP(X)$.

Compactness allows us to further assume that for each $i$, the net $(\hat{\phi}_i \circ \operatorname{Ad}(\lambda_\tau(g_j)))$ converges to a state $\psi_i$ on $(A \otimes C(X)) \rtimes_{\beta,r}^\tau G$ in the weak$^*$ topology. Then by construction, $\psi_i|_{C(X)}=\delta_x$ and
\[
\psi_i|_{A\rtimes_{\alpha,r}^\sigma G} = \lim_j (\hat{\phi}_i \circ \operatorname{Ad}(\lambda_\tau(g_j))) |_{A\rtimes_{\alpha,r}^\sigma G} = \lim_j \phi_i \circ \operatorname{Ad}(\lambda_\sigma(g_j)). \qedhere
\]
\end{proof}

\section{Powers' averaging property}

The following definition can be seen as a generalization of \cite[Definition 5.2]{kennedy2015char}.

\begin{defi} \label{defi:powers-avg-process}
A twisted C*-dynamical system $(A,G,\alpha,\sigma)$ is said to have {\em Powers' averaging property} if for every element $b$ in the reduced twisted crossed product $A\rtimes_{\alpha,r}^\sigma G$ satisfying $E(b)=0$ and every $\epsilon > 0$ there are $g_1,\ldots,g_n \in G$ such that
\[
\left\| \frac{1}{n} \sum_{i=1}^n \lambda_\sigma(g_i) b \lambda_\sigma(g_i)^* \right\| < \epsilon.
\]
Here $E\colon A\rtimes_{\alpha,r}^\sigma G \to A$ denotes the canonical conditional expectation of $A\rtimes_{\alpha,r}^\sigma G$ onto $A$.
\end{defi}

A key result of B\'{e}dos and Conti \cite[Theorem 3.8]{bedosconti} is that reduced crossed products of twisted C*-dynamical systems over $(\mr{P_{com}})$ and $PH$ groups satisfy an averaging property that is similar to Definition \ref{defi:powers-avg-process}. In this section we will show that twisted C*-dynamical systems over C*-simple groups always satisfy Powers' averaging property.

Recall that the $G$-action on a $G$-space $X$ is said to be \emph{free} if $gx\neq x$ for all $g\in G\sm\{1\}$ and $x\in X$. One of the main results of \cite{kalantarkennedy} is the following theorem.

\begin{theorem}\label{kk2014}A discrete group $G$ is C*-simple if and only if the $G$-action on the Furstenberg boundary $\partial_FG$ is free.\end{theorem}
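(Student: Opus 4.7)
The plan is to prove the two implications via Lemma~\ref{lemmaconv} applied to the natural extension into $C(\partial_F G) \rtimes_r G$, together with the $G$-injectivity of $C(\partial_F G)$ and the Powers' averaging characterization of C*-simplicity established in \cite{haagerup2015, kennedy2015char}. Both directions funnel through this averaging condition.

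For the direction assuming that $G$ acts freely on $\partial_F G$, I would verify Powers' averaging for $C^*_r(G)$ directly. Fix $a \in C^*_r(G)$ with canonical trace $\tau(a) = 0$. Suppose for contradiction that $0 \notin \overline{\conv}\{\lambda(g) a \lambda(g)^* : g \in G\}$. Hahn--Banach separation then produces a state $\phi$ on $C^*_r(G)$ and a constant $c > 0$ such that $\phi(\lambda(g) a \lambda(g)^*) \geq c$ for every $g \in G$. Fix any $x \in \partial_F G$ and apply Lemma~\ref{lemmaconv} to the natural extension $(C(\partial_F G), G, L, 1)$: this yields a state $\psi$ on $C(\partial_F G) \rtimes_r G$ with $\psi|_{C(\partial_F G)} = \delta_x$ and $\psi(a) \geq c$, obtained as a weak-$*$ limit of $\mathrm{Ad}(\lambda(g_j))$-translates of $\phi$. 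For each $g \neq 1$, freeness gives $g^{-1}x \neq x$, so one can pick $f \in C(\partial_F G)$ with $0 \leq f \leq 1$, $f(x) = 1$, and $f(g^{-1}x) = 0$. A direct Cauchy--Schwarz computation exploiting the covariance $\lambda(g) f = \alpha_g(f) \lambda(g)$ together with $\psi((1-f)^2) = 0$ and $\psi(f \cdot \alpha_g(f)) = 0$ forces $\psi(\lambda(g)) = 0$. Hence $\psi|_{C^*_r(G)} = \tau$, yielding the contradiction $0 = \tau(a) = \psi(a) \geq c$.

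For the converse, I would argue contrapositively: assuming some $g \neq 1$ fixes a point $x \in \partial_F G$, I aim to exhibit a non-trivial closed ideal in $C^*_r(G)$ (or equivalently, to violate Powers' averaging). By the $G$-injectivity of $C(\partial_F G)$, fix a $G$-equivariant ucp map $\pi: C^*_r(G) \to C(\partial_F G)$ extending the inclusion $\mathbb{C} \hookrightarrow C(\partial_F G)$. The state $\phi = \delta_x \circ \pi$ is then $\mathrm{Ad}(\lambda(g))$-invariant. A rigidity argument appealing to the fact that $C(\partial_F G)$ is the $G$-injective envelope of $\mathbb{C}$ (in the sense of Hamana) should force $\pi(\lambda(g))$ to lie in the multiplicative domain of $\pi$ as a unitary in $C(\partial_F G)$. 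Being $g$-invariant by $G$-equivariance, this unitary satisfies $|\pi(\lambda(g))(x)| = 1$, hence $|\phi(\lambda(g))| = 1$. If $G$ were C*-simple, Powers' averaging applied to $\lambda(g)$ (whose trace is zero) would produce convex averages of $\mathrm{Ad}$-conjugates of $\lambda(g)$ with arbitrarily small norm; applying $\phi$, together with $G$-equivariance of $\pi$ and minimality of the boundary action, should conflict with $|\phi(\lambda(g))| = 1$.

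The main obstacle is the converse direction, specifically the Hamana-type rigidity argument needed to see that $\pi(\lambda(g))$ is unimodular at the fixed point $x$, and then closing the contradiction loop with Powers' averaging. The sufficient direction, by contrast, reduces cleanly to the Cauchy--Schwarz manipulation with the boundary state produced by Lemma~\ref{lemmaconv}.
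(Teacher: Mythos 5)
First, a point of order: the paper does not prove this theorem. It is imported verbatim from \cite{kalantarkennedy} (``One of the main results of \cite{kalantarkennedy} is the following theorem''), so there is no internal proof to compare against. On the merits, your forward direction (freeness implies C*-simplicity) is essentially sound and is in fact the same multiplicative-domain argument the paper runs in Lemma \ref{lemmaconv1} and Theorem \ref{convtwist}, specialized to $A=\CC$ with trivial cocycle. The one slip is that Hahn--Banach separation produces a bounded linear functional, not a state; you must first pass through the Hahn--Jordan decomposition into four states and apply Lemma \ref{lemmaconv} to all of them simultaneously with a single net $(g_j)$, exactly as the paper does, before the multiplicative-domain computation at the point $x$ makes sense. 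Powers' averaging then yields simplicity by the usual Powers argument via faithfulness of the canonical trace.

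The converse direction, however, has a genuine gap, and it sits exactly where you locate ``the main obstacle.'' The claim that the $G$-equivariant ucp map $\pi\colon C^*_r(G)\to C(\partial_FG)$ sends $\lambda(g)$ to a unitary in its multiplicative domain is not a routine consequence of Hamana rigidity: a ucp map sends unitaries only to contractions, and in fact (extending $\pi$ to a $G$-equivariant ucp map on $C(\partial_FG)\rtimes_r G$ that fixes $C(\partial_FG)$) the same freeness computation shows $\pi(\lambda(g))$ vanishes off $\mathrm{Fix}(g)$, so $\pi(\lambda(g))$ is never a unitary of $C(\partial_FG)$ unless $g$ acts trivially; even the weaker claim $|\pi(\lambda(g))(x)|=1$ at the fixed point is the entire analytic content of this direction and is left unproved. (In \cite{kalantarkennedy} this direction is handled by different means: $\partial_FG$ is extremally disconnected, so by Frol\'{\i}k's theorem $\mathrm{Fix}(g)$ is clopen, and one then analyzes the ideal structure of $C(\partial_FG)\rtimes_r G$.) Two further problems: even granting $|\phi(\lambda(g))|=1$, Powers' averaging only controls $\bigl|\phi\bigl(\frac{1}{n}\sum_i\lambda(h_igh_i^{-1})\bigr)\bigr|$, and by equivariance $\phi(\lambda(hgh^{-1}))=\pi(\lambda(g))(h^{-1}x)$ are complex numbers of modulus at most one whose phases may cancel in the average, so minimality alone does not produce a contradiction. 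And invoking ``C*-simplicity implies Powers' averaging'' from \cite{haagerup2015} and \cite{kennedy2015char} is circular here, since those proofs themselves rest on the boundary characterization you are trying to establish.
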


With the initial requirement that we consider only C*-simple groups, we will first prove some preliminary results on linear functionals of reduced twisted crossed products.

\begin{lemma}\label{lemmaconv1}
Let $(A,G,\alpha,\sigma)$ be a twisted C*-dynamical system where $G$ is C*-simple, and let $\phi$ be a bounded linear functional on $A\rtimes_{\alpha,r}^\sigma G$. If $E$ denotes the canonical conditional expectation of $A\rtimes_{\alpha,r}^\sigma G$ onto $A$, then there exists $\psi$ in the weak$^*$ closure of $\{\phi \circ \operatorname{Ad}(\lambda_\sigma(g)) \mid g \in G\}$ such that $\psi=\psi\circ E$.
\end{lemma}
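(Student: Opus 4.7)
The plan is to reduce the assertion to the case of states via Jordan decomposition, and then exploit the natural extension $(A \otimes C(X), G, \beta, \tau)$ with $X = \partial_F G$, combined with freeness of the $G$-action on $\partial_F G$ (Theorem \ref{kk2014}).

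First I would write $\phi = \sum_{i=1}^{4} c_i \phi_i$ as a linear combination of states on $A \rtimes_{\alpha,r}^\sigma G$. Applying Lemma \ref{lemmaconv} to these four states with any choice of $x \in \partial_F G$, I obtain a single net $(g_j)$ in $G$ (common to all $i$) and states $\psi_1,\dots,\psi_4$ on $(A \otimes C(X)) \rtimes_{\beta,r}^\tau G$ with $\psi_i|_{C(X)} = \delta_x$ and $\psi_i|_{A \rtimes_{\alpha,r}^\sigma G} = \lim_j \phi_i \circ \operatorname{Ad}(\lambda_\sigma(g_j))$. The goal then becomes to show that each $\tilde\psi_i := \psi_i|_{A \rtimes_{\alpha,r}^\sigma G}$ satisfies $\tilde\psi_i = \tilde\psi_i \circ E$; taking the linear combination $\psi := \sum c_i \tilde\psi_i$ will complete the proof, since along the same net $(g_j)$ we then have $\phi \circ \operatorname{Ad}(\lambda_\sigma(g_j)) \to \psi$.

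The key step, and the one that uses C*-simplicity in an essential way, is the following. Since $\psi_i|_{C(X)} = \delta_x$ is a character, a standard GNS argument gives $\pi_{\psi_i}(1 \otimes f)\xi_{\psi_i} = f(x)\xi_{\psi_i}$, so that
\[
\psi_i((1\otimes f)\, b) = f(x)\, \psi_i(b), \qquad \psi_i(b\,(1\otimes f)) = f(x)\, \psi_i(b),
\]
for all $b \in (A \otimes C(X)) \rtimes_{\beta,r}^\tau G$ and $f \in C(X)$. Now for $g \neq 1$ and $a \in A$, I compute $\psi_i((a \otimes 1)\lambda_\tau(g)(1 \otimes f))$ in two ways. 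On the one hand it equals $f(x)\psi_i((a\otimes 1)\lambda_\tau(g))$; on the other hand, using $\lambda_\tau(g)(1 \otimes f) = (1 \otimes gf)\lambda_\tau(g)$ with $(gf)(y) = f(g^{-1}y)$, it equals $f(g^{-1}x)\psi_i((a \otimes 1)\lambda_\tau(g))$. Since $G$ acts freely on $\partial_F G$ by Theorem \ref{kk2014}, we have $g^{-1}x \neq x$; choosing $f \in C(X)$ with $f(x) = 1$ and $f(g^{-1}x) = 0$ forces $\psi_i((a \otimes 1)\lambda_\tau(g)) = 0$, and multiplying by $(1 \otimes f')$ on the left then yields $\psi_i((a \otimes f')\lambda_\tau(g)) = 0$ for every $f' \in C(X)$. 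By linearity and continuity, $\psi_i(b \lambda_\tau(g)) = 0$ for every $b \in A \otimes C(X)$ and $g \neq 1$, i.e., $\psi_i = \psi_i \circ E_\tau$, where $E_\tau$ is the canonical conditional expectation of $(A \otimes C(X)) \rtimes_{\beta,r}^\tau G$ onto $A \otimes C(X)$.

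Finally, since the inclusion $A \rtimes_{\alpha,r}^\sigma G \hookrightarrow (A \otimes C(X)) \rtimes_{\beta,r}^\tau G$ is compatible with both conditional expectations in the sense that $E_\tau|_{A \rtimes_{\alpha,r}^\sigma G} = E$ (under the identification $a \mapsto a \otimes 1$), restricting gives $\tilde\psi_i = \tilde\psi_i \circ E$, as desired. The main technical obstacle is the two-sided ``character'' computation producing the equality $f(x) = f(g^{-1}x)$ on all of $C(X)$; once this is set up cleanly, freeness of the boundary action is exactly the input that finishes the argument.
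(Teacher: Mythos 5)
Your proposal is correct and follows essentially the same route as the paper's proof: Jordan decomposition into states, Lemma \ref{lemmaconv} applied to the natural extension over $\partial_FG$, and the multiplicative-domain computation combined with freeness of the boundary action (Theorem \ref{kk2014}) to kill $\psi_i(a\lambda_\tau(g))$ for $g\neq 1$. The only difference is cosmetic: you prove the slightly stronger statement $\psi_i=\psi_i\circ E_\tau$ on the full extended crossed product before restricting, whereas the paper restricts immediately.
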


\begin{proof}

By the Hahn-Jordan decomposition for bounded linear functionals on C*-algebras, we can write $\phi = c_1 \phi_1 - c_2 \phi_2 + i(c_3 \phi_3 - c_4 \phi_4)$ for real numbers $c_1,\ldots,c_4 \geq 0$ and states $\phi_1,\ldots,\phi_4$ on $A\rtimes_{\alpha,r}^\sigma G$.

Consider the natural extension $(A \otimes C(\partial_FG), G, \beta, \tau)$ of $(A,G,\alpha,\sigma)$. By Lemma \ref{lemmaconv}, we can find states $\psi_1,\psi_2,\psi_3,\psi_4$ on $(A\otimes C(\partial_FG))\rtimes_{\beta,r}^{\tau} G$, a point $x \in \partial_FG$ and a net $(g_j)$ in $G$ such that
\[
\psi_i|_{A\rtimes_{\alpha,r}^{\tau} G} = \lim_j \phi_i \circ \operatorname{Ad}(\lambda_\sigma(g_j)) \quad \text{and} \quad \psi_i|_{C(X)}=\delta_x, \quad i=1,2,3,4.
\]

Since $G$ is C*-simple, Theorem \ref{kk2014} implies that the $G$-action on $\partial_FG$ is free, so for any $g \in G \sm \{e\}$ there is $f \in C(\partial_FG)$ such that $f(x) = 1$ and $f(g^{-1}x) = 0$. It follows that for each $i$ and every $a \in A$,
\begin{align*}
\psi_i|_{A\rtimes_{\alpha,r}^\sigma G}(a\lambda_\sigma(g))&=\psi_i(a\lambda_\tau(g))f(x)\\
&=\psi_i((a\otimes 1)\lambda_{\tau}(g)(1\otimes f))\\
&=\psi_i((a\otimes 1)(1\otimes (gf))\lambda_{\tau}(g))\\
&=\psi_i((1\otimes (gf))(a\otimes 1)\lambda_{\tau}(g))\\
&=f(g^{-1}x)\psi_i((a\otimes 1)\lambda_{\tau}(g))\\&=0.
\end{align*}
It follows by continuity that $\psi_i|_{A\rtimes_{\alpha,r}^\sigma G} = \psi_i|_{A\rtimes_{\alpha,r}^\sigma G} \circ E$ for each $i$. Hence we can take the restriction of $\psi = c_1\psi_1 - c_2 \psi_2 + i(c_3\psi_3 - c_4\psi_4)$ to $A\rtimes_{\alpha,r}^\sigma G$.
\end{proof}

\begin{theorem}\label{convtwist}
If $(A,G,\alpha,\sigma)$ is a twisted C*-dynamical system where $G$ is C*-simple, and $E$ denotes the canonical conditional expectation of $A\rtimes_{\alpha,r}^\sigma G$ onto $A$, then
\[
0\in\ov{\conv} \{ \lambda_\sigma(g) x \lambda_\sigma(g)^* \mid g \in G \}
\]
for all $x\in A\rtimes_{\alpha,r}^\sigma G$ satisfying $E(x)=0$, the closure being in the norm. In particular, $(A,G,\alpha,\sigma)$ has Powers' averaging property.
\end{theorem}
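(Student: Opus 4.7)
My plan is to prove the inclusion $0 \in \overline{\conv}\{\lambda_\sigma(g) x \lambda_\sigma(g)^* \mid g \in G\}$ by a standard Hahn--Banach separation argument that combines Lemma \ref{lemmaconv1} with the fact that $E(x)=0$, and then to derive Powers' averaging property by the usual rational-weights-plus-repetition trick.

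More concretely, set $K = \overline{\conv}\{\lambda_\sigma(g) x \lambda_\sigma(g)^* \mid g \in G\}$, a norm-closed convex subset of $A \rtimes_{\alpha,r}^\sigma G$. Suppose for contradiction that $0 \notin K$. By the Hahn--Banach separation theorem (applied to the real Banach space underlying $A \rtimes_{\alpha,r}^\sigma G$), there exist a bounded linear functional $\phi$ on $A \rtimes_{\alpha,r}^\sigma G$ and $\varepsilon > 0$ such that
\[
\Rez \phi(y) \geq \varepsilon \quad \text{for all } y \in K.
\]
In particular, $\Rez \phi(\lambda_\sigma(g) x \lambda_\sigma(g)^*) \geq \varepsilon$ for every $g \in G$, i.e., $\Rez (\phi \circ \Ad(\lambda_\sigma(g)))(x) \geq \varepsilon$ for all $g \in G$.

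Now I apply Lemma \ref{lemmaconv1} to $\phi$: there exists $\psi$ in the weak$^*$ closure of $\{\phi \circ \Ad(\lambda_\sigma(g)) \mid g \in G\}$ satisfying $\psi = \psi \circ E$. Pick a net $(g_j)$ in $G$ with $\phi \circ \Ad(\lambda_\sigma(g_j)) \to \psi$ in the weak$^*$ topology. Evaluating at $x$, we obtain on the one hand
\[
\Rez \psi(x) = \lim_j \Rez \phi(\lambda_\sigma(g_j) x \lambda_\sigma(g_j)^*) \geq \varepsilon,
\]
and on the other hand, since $E(x) = 0$,
\[
\psi(x) = \psi(E(x)) = \psi(0) = 0,
\]
a contradiction. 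Hence $0 \in K$, which is the first claim.

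For the Powers' averaging property, given $b \in A \rtimes_{\alpha,r}^\sigma G$ with $E(b) = 0$ and $\eps > 0$, the inclusion just proved furnishes $g_1,\ldots,g_m \in G$ and nonnegative coefficients $t_1,\ldots,t_m$ with $\sum_i t_i = 1$ and $\|\sum_i t_i \lambda_\sigma(g_i) b \lambda_\sigma(g_i)^*\| < \eps/2$. Approximating the $t_i$ by rationals $k_i/N$ with common denominator $N$ (using that $\|b\|$ is finite to control the error) and then listing each $g_i$ with multiplicity $k_i$ converts this into an average of the form $\frac{1}{N}\sum_{j=1}^N \lambda_\sigma(h_j) b \lambda_\sigma(h_j)^*$ of norm less than $\eps$, as required.

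I do not expect a serious obstacle: Lemma \ref{lemmaconv1} has already done the conceptual work of encoding C*-simplicity via freeness of the $G$-action on $\partial_F G$. The only points to handle with some care are the use of the real part of the separating functional and the rational-approximation step used to pass from general convex combinations to equally weighted averages in the statement of Definition \ref{defi:powers-avg-process}.
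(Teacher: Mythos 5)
Your proof is correct and follows essentially the same route as the paper's: a Hahn--Banach separation of $0$ from the closed convex hull, contradicted by the functional $\psi=\psi\circ E$ supplied by Lemma \ref{lemmaconv1}, since $\psi(x)=\psi(E(x))=0$. You merely spell out two steps the paper leaves implicit --- the explicit evaluation at $x$ yielding the contradiction, and the rational-weights/repetition argument converting a convex combination into an equally weighted average --- both of which are handled correctly.
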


\begin{proof}
Fix $x \in A\rtimes_{\alpha,r}^\sigma G$ satisfying $E(x) = 0$ and suppose for the sake of contradiction that the claim does not hold for this $x$. Let
\[
K = \ov{\conv} \{ \lambda_\sigma(g) x \lambda_\sigma(g)^* \mid g \in G \}.
\]
By the Hahn-Banach theorem, there exists a bounded linear functional $\phi$ on $A\rtimes_{\alpha,r}^\sigma G$ such that $\inf_{y \in K}\mr{Re}(\phi(y))>0$. But in particular, this implies that $\inf_{g \in G} |\phi \circ \operatorname{Ad}(\lambda_\sigma(g))(x)| > 0$, which contradicts Lemma \ref{lemmaconv1}.
\end{proof}

\section{Ideal structure of reduced crossed products}

In this section we consider the ideal structure of reduced crossed products of twisted C*-dynamical systems. In general, it is not possible to relate the ideal structure of a reduced crossed product to the ideal structure of its underlying C*-algebra, even when the group is C*-simple. Indeed, de la Harpe and Skandalis \cite{delaharpeskandalis} constructed examples of C*-dynamical systems over Powers groups (which are C*-simple) with the property that the reduced crossed product has many non-trivial ideals, but the underlying C*-algebra has only a single non-trivial invariant ideal.

However, B\'{e}dos and Conti \cite{bedosconti}, showed that for exact twisted C*-dynamical systems over $(\mr{P_{com}})$ and $PH$ groups, there is a bijective correspondence between maximal ideals of the reduced crossed product and maximal ideals of the underlying C*-algebra.

In this section we will show that this bijective correspondence between maximal ideals holds for all twisted C*-dynamical systems over C*-simple groups. In particular, we will not require the system to be exact.

Let $(A,G,\alpha,\sigma)$ be a twisted C*-dynamical system, and let $I$ be a $G$-invariant ideal of $A$. Let $I\rtimes_{\alpha,r}^\sigma G$ denote the ideal in $A\rtimes_{\alpha,r}^\sigma G$ generated by $I$, and let $\pi\colon A \to A/I$ denote the corresponding quotient map. The twisted action $(\alpha,\sigma)$ then induces a twisted action $(\dot\alpha,\dot\sigma)$ of $G$ on $A/I$ such that $\dot\alpha_s\circ\pi=\pi\circ\alpha_s$ for all $s\in G$ and $\dot\sigma=\pi\circ\sigma$. Therefore $\pi$ induces a surjective $^*$-homomorphism $\pi \rtimes_{\alpha,r}^\sigma \operatorname{id}\colon A \rtimes_{\alpha,r}^\sigma G \to A/I \rtimes_{\dot\alpha,r}^{\dot\sigma} G$ at the level of crossed products, giving rise to the following commutative diagram:
\begin{equation}
\begin{tikzcd}
0 \arrow{r} & I \rtimes_{\alpha,r}^\sigma G \arrow{r}\arrow{d}{E_I} & A \rtimes_{\alpha,r}^\sigma G \arrow{r}{\pi \rtimes_{\alpha,r}^\sigma \operatorname{id}} \arrow{d}{E_A} & A/I \rtimes_{\dot\alpha,r}^{\dot\sigma} G \arrow{r}\arrow{d}{E_{A/I}} & 0\\
0 \arrow{r} & I \arrow{r} & A \arrow{r}{\pi} & A/I \arrow{r} & 0
\end{tikzcd}\label{eqn:comm-diagram}\end{equation}
Here $E_A$ and $E_{A/I}$ denote the canonical conditional expectations and $E_I=E_A|_{I\rtimes_{\alpha,r}^\sigma G}$. We may note that the ideal $I\rtimes_{\alpha,r}^\sigma G$ coincides with the reduced twisted crossed product of the twisted C*-dynamical system $(I,G,\alpha,\sigma)$ where $\sigma$ is a cocycle in the multiplier algebra of $I$.

In the following, we will use the notation
$$I\bar{\rtimes}_{\alpha,r}^\sigma G=\ker(\pi\rtimes_{\alpha,r}^\sigma\mr{id}).$$
Now observe that the upper sequence in (\ref{eqn:comm-diagram}) is exact precisely when $I \rtimes_{\alpha,r}^\sigma G = I\bar{\rtimes}_{\alpha,r}^\sigma G.$ It is clear that the inclusion $I \rtimes_{\alpha,r}^\sigma G \subset I\bar{\rtimes}_{\alpha,r}^\sigma G$ always holds, but equality does not necessarily hold in general (see e.g. \cite[Remark 1.17]{sierakowski2010}). If equality does hold for every $G$-invariant ideal $I$ in $A$, then the C*-dynamical system $(A,G,\alpha,\sigma)$ is said to be {\em exact}. If $G$ is exact, then a result of Exel \cite{exel2002} implies that every twisted C*-dynamical system over $G$ is exact.

The next lemma generalizes \cite[Lemma 3.20]{kalantarkennedyozbr}.

\begin{lemma} \label{lem:ideals-1}
Let $(A,G,\alpha,\sigma)$ be a twisted C*-dynamical system where $G$ is C*-simple and let $(A \otimes C(\partial_FG),G,\beta,\tau)$ denote the natural extension. Let $I$ be a proper ideal in $A \rtimes_{\alpha,r}^\sigma G$ and let $J$ denote the ideal in $(A \otimes C(\partial_FG)) \rtimes_{\beta,r}^\tau G$ generated by $I$. Then $J$ is proper.
\end{lemma}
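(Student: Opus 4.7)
The plan is to argue by contradiction, producing a state $\psi$ on $B := (A\otimes C(\partial_FG))\rtimes_{\beta,r}^\tau G$ that annihilates $J$ while satisfying $\psi(1)=1$; this will force $1\notin J$, so that $J$ must be proper.

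First I would fix a state $\phi$ on $A\rtimes_{\alpha,r}^\sigma G$ vanishing on $I$ (which exists because $I$ is proper) and invoke Lemma~\ref{lemmaconv} with $n=1$ to obtain a state $\psi$ on $B$, a net $(g_j)\subset G$, and a point $x\in\partial_FG$ such that $\psi|_{A\rtimes_{\alpha,r}^\sigma G}$ equals the weak$^*$-limit of $\phi\circ\operatorname{Ad}(\lambda_\sigma(g_j))$ and $\psi|_{C(\partial_FG)}=\delta_x$. Because $I$ is an ideal, each $\phi\circ\operatorname{Ad}(\lambda_\sigma(g_j))$ still annihilates $I$, and so does $\psi|_{A\rtimes_{\alpha,r}^\sigma G}$ after passing to the limit. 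Since any state vanishing on a two-sided ideal has GNS representation killing that ideal, this upgrades at once to the stronger identity $\psi(cyd)=0$ for all $c,d\in A\rtimes_{\alpha,r}^\sigma G$ and $y\in I$.

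The next ingredient is a multiplicative-domain observation: because $\psi|_{C(\partial_FG)}=\delta_x$ is a character, every $f\in C(\partial_FG)\subset B$ lies in the multiplicative domain of $\psi$, yielding $\psi(fb)=f(x)\psi(b)$ and $\psi(bf)=\psi(b)f(x)$ for all $b\in B$. I would then identify a convenient dense subspace of $B$: by rewriting generators $(a\otimes h)\lambda_\tau(s)$ as $h\cdot(a\lambda_\sigma(s))$ using that $A$ and $C(\partial_FG)$ commute inside $A\otimes C(\partial_FG)$, one sees that $B$ is the norm closure of $\Span\{fc:f\in C(\partial_FG),\,c\in A\rtimes_{\alpha,r}^\sigma G\}$, and symmetrically of $\Span\{dg:d\in A\rtimes_{\alpha,r}^\sigma G,\,g\in C(\partial_FG)\}$.

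Combining these, for $a=\sum_i f_ic_i$ and $b=\sum_j d_jg_j$ drawn from the two dense subspaces and any $y\in I$, two applications of the multiplicative-domain identity give
\[
\psi(ayb)=\sum_{i,j}f_i(x)g_j(x)\,\psi(c_iyd_j)=0,
\]
the inner factors vanishing by the previous step. Norm continuity of $\psi$ together with density then yields $\psi(BIB)=0$, hence $\psi(J)=\psi(\overline{BIB})=0$, while $\psi(1)=1$ forces $1\notin J$. The main obstacle I anticipate is ensuring that $\psi$ vanishes not merely on $I$ but on the entire ideal $J\subset B$ it generates; this is precisely where the character property $\psi|_{C(\partial_FG)}=\delta_x$ is essential, which in turn is the structural reason for passing to the natural extension in the first place.
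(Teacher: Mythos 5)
Your proof is correct and follows essentially the same route as the paper's: both take a state $\phi$ annihilating $I$, push it via Lemma \ref{lemmaconv} to a state $\psi$ on $(A\otimes C(\partial_FG))\rtimes_{\beta,r}^\tau G$ with $\psi|_{C(\partial_FG)}=\delta_x$, and use the multiplicative domain of $\psi$ to conclude $\psi(J)=0$, hence $1\notin J$. The only cosmetic difference is that the paper computes directly on generators $(a_1\otimes f_1)\lambda_\tau(s_1)\,b\,(a_2\otimes f_2)\lambda_\tau(s_2)$ instead of first isolating the dense subspaces $\Span\{fc\}$ and $\Span\{dg\}$ as you do.
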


\begin{proof}
Let $\phi$ be a state on $A \rtimes_{\alpha,r}^\sigma G$ such that $\phi(I) = 0$. By Lemma \ref{lemmaconv} there is a state $\psi$ on $(A \otimes C(\partial_FG)) \rtimes_{\beta,r}^\tau G$, a net $(g_i)$ in $G$ and $x \in \partial_FG$ such that $\psi|_{A\rtimes_{\alpha,r}^{\sigma}G} = \lim_j \phi \circ \operatorname{Ad}(\lambda_\sigma(g_j))$ and $\psi|_{C(\partial_FG)} = \delta_x$.

Note that $\psi|_{A\rtimes_{\alpha,r}^{\tau} G}(I) = 0$ and $C(\partial_FG)$ is contained in the multiplicative domain of $\psi$. Hence for $b \in I$, $a_1,a_2\in A$, $f_1,f_2 \in C(\partial_FG)$ and $s_1,s_2\in G$ we have
\begin{align*}
&\psi((a_1\otimes f_1)\lambda_\tau(s_1)b(a_2\otimes f_2)\lambda_\tau(s_2))\\&=f_1(x)\psi(a_1\lambda_\sigma(s_1)ba_2\lambda_\sigma(s_2))f_2(s_2x)\\
&=0.
\end{align*}
It follows that $\psi(J) = 0$. Hence $J$ is proper.
\end{proof}

The next lemma generalizes \cite[Lemma 3.21]{kalantarkennedyozbr}. The proof is a simple modification of the proof given there.

\begin{lemma} \label{lem:ideals-2}
Let $(A,G,\alpha,\sigma)$ be a twisted C*-dynamical system where $G$ is C*-simple and let $(A \otimes C(\partial_FG),G,\beta,\tau)$ denote the natural extension. Let $J$ be an ideal in $(A \otimes C(\partial_FG)) \rtimes_{\beta,r}^\tau G$. Then setting $J_A = J \cap (A \otimes C(\partial_FG))$,
\[
J_A \rtimes_{\beta,r}^\tau G \subset J \subset J_A \bar{\rtimes}_{\beta,r}^\tau G.
\]
\end{lemma}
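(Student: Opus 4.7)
The first inclusion $J_A \rtimes_{\beta,r}^\tau G \subset J$ is immediate: since $J_A \subset J$ and $J$ is a closed two-sided ideal containing $\lambda_\tau(g)$-multiples of its elements, every finite sum $\sum_{g} a_g \lambda_\tau(g)$ with $a_g \in J_A$ lies in $J$, and the norm-closure of such sums is precisely $J_A \rtimes_{\beta,r}^\tau G$.

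For the second inclusion $J \subset J_A \bar\rtimes_{\beta,r}^\tau G = \ker(\pi\rtimes\mr{id})$, set $B := (A\otimes C(\partial_FG))/J_A$ and $\tilde\pi := \pi\rtimes\mr{id}$. Faithfulness of $E_{A/J_A}$ on $B \rtimes_{\dot\beta,r}^{\dot\tau} G$, together with the commutative diagram \eqref{eqn:comm-diagram}, reduces the claim to showing $E_A(y^* y) \in J_A$ for every $y \in J$. We may assume $J$ is proper, so that $J \cap C(\partial_FG)$, being a $G$-invariant ideal of $C(\partial_FG)$, is zero by minimality of $G\curvearrowright\partial_FG$; in particular $C(\partial_FG)$ embeds in $B$, and the canonical injection $(A\otimes C(\partial_FG))/J_A \hookrightarrow ((A\otimes C(\partial_FG))\rtimes_{\beta,r}^\tau G)/J$ is isometric because $J \cap (A\otimes C(\partial_FG)) = J_A$.

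My strategy mimics Lemma~\ref{lem:ideals-1}, now applied inside the natural extension itself. By Hahn-Banach it is enough to show that every state $\phi$ on $(A\otimes C(\partial_FG))\rtimes_{\beta,r}^\tau G$ that vanishes on $\ker\tilde\pi$ vanishes on $J$. Given such a $\phi$, the restriction $\phi|_{C(\partial_FG)}$ is a probability measure; strong proximality yields a net $(g_j)$ in $G$ with $g_j\cdot\phi|_{C(\partial_FG)} \to \delta_x$, and a weak$^*$ subnet of $\phi\circ\mr{Ad}(\lambda_\tau(g_j))$ converges to a state $\psi$ with $\psi|_{C(\partial_FG)} = \delta_x$. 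By $G$-invariance of $\ker\tilde\pi$, also $\psi(\ker\tilde\pi) = 0$. Multiplicativity of $\delta_x$ places $C(\partial_FG)$ inside the multiplicative domain of $\psi$, and the proof of Lemma~\ref{lem:ideals-1} then applies verbatim: using the centrality of $C(\partial_FG)$ in $A\otimes C(\partial_FG)$ and picking $f \in C(\partial_FG)$ with $f(x) = 1$ and $f(g^{-1}x) = 0$ for $g \neq 1$ (possible by Theorem~\ref{kk2014}), one shows $\psi(a\lambda_\tau(g)) = 0$ for $a \in A\otimes C(\partial_FG)$ and $g \neq 1$. Consequently $\psi = \psi\circ E_A$.

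To conclude, I combine this factorization with Powers' averaging (Theorem~\ref{convtwist}) applied to the twisted system $(A\otimes C(\partial_FG),G,\beta,\tau)$. Given $y \in J$, the element $y - E_A(y)$ has $E_A$-zero, so for every $\varepsilon > 0$ we find $c_i \geq 0, g_i \in G$ with $\sum c_i = 1$ and
\[
  \Bigl\| \sum_i c_i \mr{Ad}(\lambda_\tau(g_i))(y) - \sum_i c_i \beta_{g_i}(E_A(y)) \Bigr\| < \varepsilon.
\]
The first sum lies in $J$, so by the isometry above the second sum lies within $\varepsilon$ of $J_A$ in $A\otimes C(\partial_FG)$, forcing $\psi\bigl(\sum_i c_i \beta_{g_i}(E_A(y))\bigr) \to 0$ as $\varepsilon \to 0$; paired with $\psi = \psi\circ E_A$ and $\psi(J_A) = 0$, this pins down $\psi(y) = 0$. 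A Hahn-Jordan decomposition of $\phi$ together with Lemma~\ref{lemmaconv1} then propagates the vanishing from the weak$^*$-limit state $\psi$ back to the original $\phi$, yielding $\phi(J) = 0$. The hardest step is precisely this final propagation from $\psi$ to $\phi$: it requires coordinating the strong-proximality net and the Powers-averaging net so that the multiplicative-domain identities at the limit actually control $\phi$ on each $y \in J$, and handling possible non-$E_A$-factorization of $\phi$ via the Jordan decomposition as in the proof of Theorem~\ref{convtwist}.
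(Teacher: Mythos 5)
Your first inclusion and your reduction of the second inclusion to showing $E_A(y^*y)\in J_A$ for $y\in J$ (via faithfulness of $E_{A/J_A}$ on the quotient crossed product, together with the isometric embedding of $(A\otimes C(\partial_FG))/J_A$) are correct, and they match the opening of the argument the paper relies on (the paper defers to \cite[Lemma 3.21]{kalantarkennedyozbr}). But the core of your proof of the second inclusion has two genuine gaps. First, in the Powers'-averaging step you deduce that $\psi\bigl(\sum_i c_i\beta_{g_i}(E_A(y))\bigr)$ is small and then claim this, together with $\psi=\psi\circ E_A$, ``pins down'' $\psi(y)=\psi(E_A(y))=0$. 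It does not: $\psi$ is not $G$-invariant, the group elements $g_i$ change with $\varepsilon$, and there is no relation between $\psi(\beta_{g}(E_A(y)))$ and $\psi(E_A(y))$. What you actually prove is that $0$ lies in the norm-closed convex hull of the $G$-translates of the image of $E_A(y)$ in $(A\otimes C(\partial_FG))/J_A$, which is strictly weaker than that image being $0$ (translates of a positive bump function on a boundary can average to something of arbitrarily small norm). Second, the final ``propagation'' from the weak$^*$-limit state $\psi$ back to $\phi$ is asserted rather than proved, and it fails as a general principle: since $J$ is $\operatorname{Ad}(\lambda_\tau(g))$-invariant, one can perfectly well have $\phi(\operatorname{Ad}(\lambda_\tau(g_j))(y))\to 0$ along the proximality net while $\phi(y)\neq 0$, so vanishing of a limit point of the orbit of $\phi$ on $J$ gives no control over $\phi$ itself. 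Neither Lemma \ref{lemmaconv1} nor a Hahn--Jordan decomposition repairs this.

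The proof the paper intends is different and more elementary at this point: it uses the freeness of $G\curvearrowright\partial_FG$ (Theorem \ref{kk2014}) directly on elements rather than on states. Given $y\in J$ and $\varepsilon>0$, approximate $y$ in norm by a finite sum $\sum_{g\in F}a_g\lambda_\tau(g)$ with $1\in F$; by freeness and compactness choose a finite open cover $\{U_i\}$ of $\partial_FG$ with $gU_i\cap U_i=\emptyset$ for all $g\in F\setminus\{1\}$ and a subordinate partition of unity $\{h_i\}$; then $\sum_i h_i^{1/2}yh_i^{1/2}$ lies in $J$ and is within $2\varepsilon$ of $E_A(y)$, whence $E_A(y)\in J\cap(A\otimes C(\partial_FG))=J_A$. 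Combined with your (correct) reduction via faithfulness of $E_{A/J_A}$, this yields $J\subset J_A\bar\rtimes_{\beta,r}^\tau G$. If you want to keep your framework, replace the averaging and propagation steps with this cutting-down argument; Powers' averaging is not the right tool here.
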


For any twisted $C^*$-dynamical system $(A,G,\alpha,\sigma)$ and any $G$-in\-va\-ri\-ant ideal $I$ in $A\bar\rtimes_{\alpha,r}^\sigma G$, the commutative diagram (\ref{eqn:comm-diagram}) yields the identity
\begin{equation}\label{kereq1}
(I\bar\rtimes_{\alpha,r}^\sigma G)\cap A=I.
\end{equation}
Moreover, let $X$ be a $G$-boundary. We observe for the natural extension $(A\otimes C(X),G,\beta,\tau)$ that if $K\s (A\otimes C(X))\rtimes_{\beta,r}^\tau G$ is an ideal and $K_A=K\cap(A\otimes C(X))$, then there is a commutative diagram of $^*$-homomorphisms
\[
\begin{tikzcd}
A\rtimes_{\alpha,r}^\sigma G\arrow{r}\arrow{d}&(A\otimes C(X))\rtimes_{\beta,r}^\tau G\arrow{d}\\
A/(K\cap A)\rtimes_{\dot\alpha,r}^{\dot\sigma} G\arrow{r}&(A\otimes C(X))/K_A\rtimes_{\dot\beta,r}^{\dot\tau} G
\end{tikzcd}
\]
where the horizontal arrows are injective. It follows that 
\begin{equation}\label{kereq2}(K_A\bar\rtimes_{\beta,r}^\tau G)\cap(A\rtimes_{\alpha,r}^\sigma G)=(K\cap A)\bar\rtimes_{\alpha,r}^\sigma G.\end{equation}

In the following, for any twisted $C^*$-dynamical system $(A,G,\alpha,\sigma)$, a \emph{maximal $G$-invariant ideal} $I$ in $A$ will mean a proper $G$-invariant ideal which is maximal among proper $G$-invariant ideals in $A$. Note that this does not entail that $I$ is a maximal ideal (for instance, for non-trivial $G$ one may consider $A=C(X)$ where $X$ is a minimal $G$-space with more than one point).

\begin{theorem}
Let $(A,G,\alpha,\sigma)$ be a twisted C*-dynamical system where $G$ is C*-simple. For a maximal ideal $I$ of $A \rtimes_{\alpha,r}^\sigma G$, $I \cap A$ is a maximal $G$-invariant ideal of $A$. Conversely, for a maximal $G$-invariant ideal $Y$ of $A$, the ideal $Y \bar{\rtimes}_{\alpha,r}^\sigma G$ of $A \rtimes_{\alpha,r}^\sigma G$ is maximal. Moreover, this correspondence is bijective.

\end{theorem}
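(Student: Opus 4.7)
The plan is to reduce both directions of the theorem to a single key step, which I will call the Containment Lemma: for any proper ideal $I$ of $A \rtimes_{\alpha,r}^\sigma G$ there exists a proper $G$-invariant ideal $Y$ of $A$ such that $I \subset Y \bar{\rtimes}_{\alpha,r}^\sigma G$. To prove this, I would pass to the natural extension $(A \otimes C(\partial_FG), G, \beta, \tau)$ and let $J$ denote the ideal in $(A \otimes C(\partial_FG)) \rtimes_{\beta,r}^\tau G$ generated by $I$. Lemma \ref{lem:ideals-1} ensures $J$ is proper, and Lemma \ref{lem:ideals-2} gives $J \subset (J \cap (A \otimes C(\partial_FG))) \bar{\rtimes}_{\beta,r}^\tau G$. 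Intersecting with $A \rtimes_{\alpha,r}^\sigma G$ and invoking (\ref{kereq2}), we obtain
\[
I \subset J \cap (A \rtimes_{\alpha,r}^\sigma G) \subset (J \cap A) \bar{\rtimes}_{\alpha,r}^\sigma G,
\]
so $Y := J \cap A$ is the desired ideal: it is $G$-invariant, and it is proper because $J \cap (A \otimes C(\partial_FG))$ is proper in $A \otimes C(\partial_FG)$.

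With the Containment Lemma in hand, the first assertion is almost immediate. Given a maximal ideal $I$ of $A \rtimes_{\alpha,r}^\sigma G$, the Lemma provides $I \subset Y \bar{\rtimes}_{\alpha,r}^\sigma G$ for some proper $G$-invariant $Y$; maximality of $I$ forces equality, so by (\ref{kereq1}) we get $I \cap A = Y$ and hence $I = (I \cap A) \bar{\rtimes}_{\alpha,r}^\sigma G$. To see that $I \cap A$ is maximal among $G$-invariant ideals of $A$, note that any proper $G$-invariant $Y' \supsetneq I \cap A$ would yield an ideal $Y' \bar{\rtimes}_{\alpha,r}^\sigma G$ that is proper by (\ref{kereq1}) and strictly contains $I = (I \cap A) \bar{\rtimes}_{\alpha,r}^\sigma G$, contradicting the maximality of $I$.

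For the converse, if $Y$ is a maximal $G$-invariant ideal of $A$, then $Y \bar{\rtimes}_{\alpha,r}^\sigma G$ is proper by (\ref{kereq1}). If $I \supsetneq Y \bar{\rtimes}_{\alpha,r}^\sigma G$ were a proper ideal of $A \rtimes_{\alpha,r}^\sigma G$, the Containment Lemma applied to $I$ would produce a proper $G$-invariant ideal $Y'$ with $I \subset Y' \bar{\rtimes}_{\alpha,r}^\sigma G$ and $Y' \supset I \cap A \supset Y$; maximality of $Y$ would then force $Y' = Y$, giving $I \subset Y \bar{\rtimes}_{\alpha,r}^\sigma G$, a contradiction. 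Bijectivity of the correspondence $I \leftrightarrow I \cap A$ is then automatic from (\ref{kereq1}), which gives $Y \bar{\rtimes}_{\alpha,r}^\sigma G \cap A = Y$, together with the identity $(I \cap A) \bar{\rtimes}_{\alpha,r}^\sigma G = I$ already established for maximal $I$. The main obstacle is the Containment Lemma itself; this is where C*-simplicity enters, via the freeness of the $G$-action on $\partial_FG$ built into Lemmas \ref{lem:ideals-1} and \ref{lem:ideals-2}. Without such control over how ideals of the crossed product propagate to the natural extension, one cannot hope to recover a bijection of this kind, as the de la Harpe--Skandalis examples recalled at the start of this section illustrate.
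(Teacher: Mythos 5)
Your proposal is correct and follows essentially the same route as the paper: both halves rest on passing to the natural extension over $\partial_FG$, applying Lemmas \ref{lem:ideals-1} and \ref{lem:ideals-2} together with the identities (\ref{kereq1}) and (\ref{kereq2}) to trap a proper ideal $I$ inside $(J\cap A)\bar{\rtimes}_{\alpha,r}^\sigma G$. Your only departure is organizational — you factor this trapping step out as a single ``Containment Lemma'' rather than repeating it in each direction as the paper does — which is a tidy but not substantively different presentation.
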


\begin{proof}
Let $Y$ be a maximal $G$-invariant ideal in $A$. We must show that the ideal $Y \bar{\rtimes}_{\alpha,r}^\sigma G$ in $A \rtimes_{\alpha,r}^\sigma G$ is maximal. Suppose that $J$ is a proper ideal in $A \rtimes_{\alpha,r}^\sigma G$ such that $Y \bar{\rtimes}_{\alpha,r}^\sigma G \subset J$.

Consider the natural extension $(A \otimes C(\partial_FG), G, \beta, \tau)$ of $(A,G,\alpha,\sigma)$. Let $K$ denote the ideal in $(A \otimes C(\partial_FG)) \rtimes_{\beta,r}^\tau G$ generated by $J$. By Lemma \ref{lem:ideals-2}, $K \subset K_A \overline{\rtimes}_{\beta,r}^\tau G$, where $K_A = K \cap (A \otimes C(\partial_FG))$. By (\ref{kereq2}),
\[
J \subset K \cap (A \rtimes_{\alpha,r}^\sigma G) \subset (K_A \bar{\rtimes}_{\beta,r}^\tau G) \cap (A \rtimes_{\alpha,r}^\sigma G)=(K \cap A) \bar{\rtimes}_{\alpha,r}^\sigma G,
\]
and applying (\ref{kereq1}) to $Y$ and $K\cap A$ gives
\[
Y \subset J \cap A \subset K \cap A.
\]
Since $J$ is proper, Lemma \ref{lem:ideals-1} implies that $K$ is proper, so the maximality of $Y$ implies that $Y = K \cap A$ since $K\cap A$ is $G$-invariant. From above, $J \subset Y \bar{\rtimes}_{\alpha,r}^\sigma G$, and it follows that $Y \bar{\rtimes}_{\alpha,r}^\sigma G$ is maximal.

Now let $I$ be a maximal ideal in $A \rtimes_{\alpha,r}^\sigma G$. We must show that the ideal $I \cap A$ is maximal among proper $G$-invariant ideals in $A$.

Let $J$ denote the ideal in $(A \otimes C(\partial_FG)) \rtimes_{\beta,r}^\tau G$ generated by $I$. By Lemma \ref{lem:ideals-2}, $J \subset J_A \bar{\rtimes}_{\beta,r}^\tau G$, where $J_A = J \cap (A \otimes C(\partial_FG))$. Hence by (\ref{kereq2})
\[
I \subset J \cap (A \rtimes_{\alpha,r}^\sigma G) \subset (J_A \bar{\rtimes}_{\beta,r}^\tau G) \cap (A \rtimes_{\alpha,r}^\sigma G) = (J \cap A)\bar{\rtimes}_{\alpha,r}^\sigma G.
\]
Since $I$ is proper, Lemma \ref{lem:ideals-1} implies that $J \cap A$ is proper in $A$, so the maximality of $I$ implies that $I = (J \cap A)\bar{\rtimes}_{\alpha,r}^\sigma G$. Hence $I \cap A = J \cap A$ by (\ref{kereq1}), and it follows that 
\begin{equation}\label{kermaxeq}I = (I \cap A) \bar{\rtimes}_{\alpha,r}^\sigma G.\end{equation}

Now suppose that $Z$ is a proper $G$-invariant ideal in $A$ such that $I \cap A \subset Z$. Then $Z \bar{\rtimes}_{\alpha,r}^\sigma G$ is a proper ideal in $A \rtimes_{\alpha,r}^\sigma G$ and
\[
I = (I \cap A) \bar{\rtimes}_{\alpha,r}^\sigma G \subset Z \bar{\rtimes}_{\alpha,r}^\sigma G,
\]
so the maximality of $I$ implies that $I = Z \bar{\rtimes}_{\alpha,r}^\sigma G$. Hence
\[
I \cap A = (Z \bar{\rtimes}_{\alpha,r}^\sigma G) \cap A = Z,
\]
and it follows that $I \cap A$ is maximal. Finally it follows from the identities (\ref{kereq1}) and (\ref{kermaxeq}) that the correspondence is bijective.
\end{proof}

For a twisted $C^*$-dynamical system $(A,G,\alpha,\sigma)$ we say that $A$ is \emph{$G$-simple} if the only $G$-invariant ideals in $A$ are $\{0\}$ and $A$. As a corollary we obtain the following generalization of \cite[Theorem 3.19]{kalantarkennedyozbr}.

\begin{cor}\label{twistsimple2}Let $(A,G,\alpha,\sigma)$ is a twisted C*-dynamical system where $G$ is C*-simple. Then $A\rtimes_{\alpha,r}^\sigma G$ is simple if and only if $A$ is $G$-simple.\end{cor}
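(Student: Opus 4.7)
The plan is to deduce this corollary directly from the bijective correspondence established in the preceding theorem (Theorem 4.3), treating each implication via its contrapositive using the fact that in a unital C*-algebra, every proper ideal is contained in a maximal one by Zorn's lemma.

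For the ``if'' direction, I would argue the contrapositive: suppose $A\rtimes_{\alpha,r}^\sigma G$ is not simple, so it contains a proper non-zero ideal, which is then contained in a maximal ideal $I$. By Theorem 4.3, $I\cap A$ is a maximal proper $G$-invariant ideal of $A$. The key identity from the proof of Theorem 4.3, namely $I = (I\cap A)\bar{\rtimes}_{\alpha,r}^\sigma G$ (equation (\ref{kermaxeq})), implies that if $I\cap A$ were zero, then $I$ would be zero as well, contradicting the fact that $I$ contains a non-zero ideal. Hence $I\cap A$ is a non-zero proper $G$-invariant ideal of $A$, so $A$ is not $G$-simple.

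For the ``only if'' direction, I would again use the contrapositive: assume $A$ is not $G$-simple, so that there is a non-zero proper $G$-invariant ideal $J$ of $A$. By Zorn's lemma, $J$ is contained in a maximal $G$-invariant ideal $Y$ (in the sense defined before Theorem 4.3). Theorem 4.3 then yields that $Y\bar{\rtimes}_{\alpha,r}^\sigma G$ is a maximal, hence proper, ideal of $A\rtimes_{\alpha,r}^\sigma G$. On the other hand, it is non-zero since it contains $J$ via the inclusion $A\hookrightarrow A\rtimes_{\alpha,r}^\sigma G$ (because each element of $J$ vanishes under the quotient map $A\rtimes_{\alpha,r}^\sigma G\to A/Y\rtimes_{\dot\alpha,r}^{\dot\sigma}G$, which follows from equation (\ref{kereq1})). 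Thus $A\rtimes_{\alpha,r}^\sigma G$ has a proper non-trivial ideal and is therefore not simple.

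There is no real obstacle here; the entire content is packaged in Theorem 4.3 and the two identities (\ref{kereq1}) and (\ref{kermaxeq}). The only minor care needed is verifying that the maximal ideals produced by the correspondence are non-zero (respectively proper) in both directions, which is immediate from those identities.
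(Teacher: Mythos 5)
Your argument is correct and is exactly the intended derivation: the paper states this as an immediate consequence of the preceding theorem, and your use of Zorn's lemma together with the identities (\ref{kereq1}) and (\ref{kermaxeq}) fills in the routine details (properness and non-triviality of the corresponding ideals) in the standard way. No issues.
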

\begin{cor}\label{twistsimple3}If $G$ is C*-simple, then the reduced twisted group C*-algebra $C^*_r(G,\sigma)$ is simple for every multiplier $\sigma\colon G\times G\to\TT$.\end{cor}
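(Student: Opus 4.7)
The plan is to deduce this immediately from Corollary \ref{twistsimple2} by specializing to $A=\CC$. First I would observe that when the underlying C*-algebra is $\CC$, the automorphism group $\Aut(\CC)$ is trivial, so $\alpha$ is forced to be the trivial map and the multiplier $\sigma\colon G\times G\to\TT$ is precisely the cocycle of a twisted action in the sense of Section \ref{preliminaries}. Thus $(\CC,G,\alpha,\sigma)$ is a bona fide twisted C*-dynamical system, and by construction the reduced twisted crossed product $\CC\rtimes_{\alpha,r}^\sigma G$ coincides with the reduced twisted group C*-algebra $C^*_r(G,\sigma)$.

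Next I would note that the only ideals of $\CC$ are $\{0\}$ and $\CC$, so $\CC$ is trivially $G$-simple regardless of the action. Since $G$ is assumed C*-simple, Corollary \ref{twistsimple2} then applies and yields the simplicity of $\CC\rtimes_{\alpha,r}^\sigma G=C^*_r(G,\sigma)$. There is essentially no obstacle here; the entire content of the statement is packaged into Corollary \ref{twistsimple2}, and the corollary simply records the special case $A=\CC$.
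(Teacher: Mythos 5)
Your proposal is correct and is exactly the argument the paper intends: Corollary \ref{twistsimple3} is stated without proof as the immediate specialization of Corollary \ref{twistsimple2} to $A=\CC$, which is trivially $G$-simple, with $\CC\rtimes_{\alpha,r}^\sigma G=C^*_r(G,\sigma)$ as noted in the preliminaries. Nothing is missing.
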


By appealing to the previously mentioned structure theorem for twisted reduced crossed products, it is possible to say something about twisted C*-dynamical systems whenever the underlying group has a C*-simple quotient.

\begin{cor}\label{normalsimple}Let $(A,G,\alpha,\sigma)$ be a twisted C*-dynamical system and let $N$ be a normal subgroup of $G$. Continue to write $(\alpha,\sigma)$ for the restriction of $(\alpha,\sigma)$ to $N$. If $G/N$ is C*-simple, then $A\rtimes_{\alpha,r}^\sigma G$ is simple whenever $A\rtimes_{\alpha,r}^\sigma N$ is simple.\end{cor}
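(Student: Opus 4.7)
The strategy is to reduce the statement to a direct application of Corollary \ref{twistsimple2} via the iterated crossed product decomposition furnished by Theorem \ref{rcpstructure}. Setting $B := A\rtimes_{\alpha,r}^\sigma N$, Theorem \ref{rcpstructure} produces a twisted action $(\gamma,\nu)$ of the quotient $G/N$ on $B$ together with an isomorphism
\[
A\rtimes_{\alpha,r}^\sigma G \;\cong\; B\rtimes_{\gamma,r}^\nu (G/N),
\]
so the simplicity of the left-hand side is equivalent to the simplicity of the iterated twisted crossed product on the right.

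Now I apply Corollary \ref{twistsimple2} to the twisted C*-dynamical system $(B, G/N, \gamma, \nu)$. Since $G/N$ is C*-simple by hypothesis, that corollary says $B\rtimes_{\gamma,r}^\nu (G/N)$ is simple if and only if $B$ is $(G/N)$-simple, i.e.\ has no proper non-trivial $(G/N)$-invariant ideals. But $B$ is assumed simple as a C*-algebra, so its only ideals are $\{0\}$ and $B$; in particular it has no proper non-trivial $(G/N)$-invariant ideals, so it is trivially $(G/N)$-simple. Combining these facts yields the simplicity of $A\rtimes_{\alpha,r}^\sigma G$.

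Since the argument is just the composition of two earlier results, I do not anticipate any real obstacle. The only thing to verify is that Corollary \ref{twistsimple2} is genuinely applicable, which amounts to checking that $(\gamma,\nu)$ gives a legitimate twisted C*-dynamical system on $B$ and that the displayed isomorphism is of the form required; both are exactly what Theorem \ref{rcpstructure} provides, so no additional work is needed.
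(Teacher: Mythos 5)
Your proposal is correct and is exactly the paper's own argument: decompose $A\rtimes_{\alpha,r}^\sigma G$ as an iterated crossed product via Theorem \ref{rcpstructure}, then apply Corollary \ref{twistsimple2} to the system $(A\rtimes_{\alpha,r}^\sigma N, G/N,\gamma,\nu)$, noting that simplicity of $A\rtimes_{\alpha,r}^\sigma N$ trivially implies $(G/N)$-simplicity. No gaps.
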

\begin{proof}By Theorem \ref{rcpstructure} there exists a twisted action $(\gamma,\nu)$ of $G/N$ on $A\rtimes_{\alpha,r}^\sigma N$ such that $$A\rtimes_{\alpha,r}^\sigma G\cong(A\rtimes_{\alpha,r}^\sigma N)\rtimes_{\gamma,r}^\nu(G/N).$$ The desired conclusion now follows from Corollary \ref{twistsimple2}.\end{proof}

We also obtain another proof of \cite[Theorem 3.14]{kalantarkennedyozbr}.

\begin{cor}\label{groupext}Let $G$ be a discrete group. If $G$ contains a normal C*-simple subgroup $N$ such that $G/N$ is C*-simple, then $G$ is C*-simple.\end{cor}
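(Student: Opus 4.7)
The plan is to deduce this directly from Corollary \ref{normalsimple} by specializing to the trivial dynamical system. I will take $A=\CC$ with the trivial action $\alpha$ and trivial cocycle $\sigma$, so that for any subgroup $H\s G$ we have the identification
\[
\CC\rtimes_{\alpha,r}^{\sigma}H=C^*_r(H).
\]
This is the natural setting in which $G$-simplicity of $A$ becomes automatic, since $A=\CC$ has no non-trivial ideals at all.

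Now, the hypothesis that $N$ is C*-simple means exactly that $C^*_r(N)=\CC\rtimes_{\alpha,r}^{\sigma}N$ is simple. Since by hypothesis $G/N$ is also C*-simple, Corollary \ref{normalsimple} applied to the twisted system $(\CC,G,\alpha,\sigma)$ with the normal subgroup $N$ implies that
\[
C^*_r(G)\;=\;\CC\rtimes_{\alpha,r}^{\sigma}G
\]
is simple. Hence $G$ is C*-simple, as required.

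There is no real obstacle here — the entire content of the corollary is already packaged into Corollary \ref{normalsimple}, which in turn rests on Theorem \ref{rcpstructure} (the iterated crossed product decomposition) together with Corollary \ref{twistsimple2}. The only point that might deserve explicit mention is that the restriction of the trivial twisted action on $\CC$ to $N$ is again trivial, so $\CC\rtimes_{\alpha,r}^{\sigma}N$ really does coincide with $C^*_r(N)$ and not with some twisted variant; this is immediate from the definitions. Consequently the proof is a one-line invocation of Corollary \ref{normalsimple}.
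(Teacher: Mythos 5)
Your proof is correct and is essentially the paper's intended argument: Corollary \ref{groupext} is stated as an immediate consequence of Corollary \ref{normalsimple}, obtained by specializing to $A=\CC$ with the trivial twisted action so that the reduced twisted crossed products over $N$ and $G$ become $C^*_r(N)$ and $C^*_r(G)$. Nothing is missing.
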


What follows is a generalization of \cite[Proposition 3.13]{bedosconti}. The proof there applies verbatim, so we omit it.

\begin{prop}\label{idealtrivialact}Let $(A,G,\alpha,\sigma)$ be an exact twisted C*-dynamical system where $\alpha$ is trivial. If $(A,G,\alpha,\sigma)$ has Powers' averaging property, then there is a bijective correspondence between the set of ideals in $A$ and the set of ideals in $A\rtimes_{\alpha,r}^\sigma G$ given by $Y\mapsto Y\bar\rtimes_{\alpha,r}^\sigma G$.\end{prop}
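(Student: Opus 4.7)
The plan is to establish the bijection in two steps, crucially exploiting triviality of $\alpha$ together with Powers' averaging property and exactness. Since $\alpha$ is trivial, every ideal $Y$ of $A$ is automatically $G$-invariant, so identity (\ref{kereq1}) yields $(Y \bar{\rtimes}_{\alpha,r}^\sigma G) \cap A = Y$, which immediately gives injectivity of the correspondence.

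For surjectivity, I fix an ideal $I$ of $A \rtimes_{\alpha,r}^\sigma G$ and set $Y := I \cap A$; the goal is to show $I = Y \bar{\rtimes}_{\alpha,r}^\sigma G$. The inclusion $Y \bar{\rtimes}_{\alpha,r}^\sigma G \subseteq I$ is immediate from exactness: by hypothesis $Y \bar{\rtimes}_{\alpha,r}^\sigma G$ equals the closed ideal $Y \rtimes_{\alpha,r}^\sigma G$ generated by $Y$, which lies in $I$ since $Y \subseteq I$ and $I$ is a closed ideal.

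The key step is the reverse inclusion $I \subseteq Y \bar{\rtimes}_{\alpha,r}^\sigma G$, and the strategy is first to show that $E$ carries $I$ into $Y$. Fix $x \in I$ and $\epsilon > 0$, and apply Powers' averaging to $x^*x - E(x^*x)$ (which has vanishing $E$-image) to obtain $g_1,\ldots,g_n \in G$ with
\[
\left\| \frac{1}{n}\sum_{i=1}^n \lambda_\sigma(g_i)(x^*x - E(x^*x)) \lambda_\sigma(g_i)^* \right\| < \epsilon.
\]
Triviality of $\alpha$ now plays its decisive role: for every $a \in A$ and $g \in G$, $\lambda_\sigma(g) a \lambda_\sigma(g)^* = \alpha_g(a) = a$, so $\lambda_\sigma(g_i) E(x^*x) \lambda_\sigma(g_i)^* = E(x^*x)$, and the estimate becomes
\[
\left\| \frac{1}{n}\sum_{i=1}^n \lambda_\sigma(g_i) x^*x \lambda_\sigma(g_i)^* - E(x^*x) \right\| < \epsilon.
\]
Since the averaged sum lies in $I$ and $I$ is norm-closed, letting $\epsilon \to 0$ yields $E(x^*x) \in I \cap A = Y$.

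To conclude, consider the surjection $\tilde\pi := \pi \rtimes_{\alpha,r}^\sigma \mr{id} \colon A \rtimes_{\alpha,r}^\sigma G \to (A/Y) \rtimes_{\dot\alpha,r}^{\dot\sigma} G$, whose kernel is by definition $Y \bar{\rtimes}_{\alpha,r}^\sigma G$. Commutativity of diagram (\ref{eqn:comm-diagram}) gives
\[
E_{A/Y}(\tilde\pi(x)^* \tilde\pi(x)) = E_{A/Y}(\tilde\pi(x^*x)) = \pi(E(x^*x)) = 0,
\]
and faithfulness of $E_{A/Y}$ forces $\tilde\pi(x) = 0$, i.e., $x \in Y \bar{\rtimes}_{\alpha,r}^\sigma G$. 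The linchpin I expect to be the recognition that triviality of $\alpha$ places $A$ inside the fixed-point algebra of the inner action $\mr{Ad}\,\lambda_\sigma(g)$ on $A\rtimes_{\alpha,r}^\sigma G$, which is exactly what allows Powers' averaging to force $E(I) \subseteq I$; without this, the averaging would merely conjugate $A$ inside itself without simplifying the estimate.
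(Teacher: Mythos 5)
Your proof is correct and follows essentially the same route as the argument the paper defers to (the B\'edos--Conti proof of their Proposition 3.13): use Powers' averaging together with the $\mr{Ad}(\lambda_\sigma(g))$-invariance of $A$ coming from triviality of $\alpha$ to get $E(x^*x)\in I\cap A$ for $x\in I$, conclude $I\subseteq (I\cap A)\bar\rtimes_{\alpha,r}^\sigma G$ via faithfulness of $E_{A/Y}$, and use exactness for the reverse inclusion and identity (\ref{kereq1}) for injectivity. No gaps.
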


In the next example, we will consider one application of the above results, adapted from \cite[Example 4.4]{bedosconti}.

\begin{exam}Let $G$ be a discrete group with center $Z$. By \cite[Theorem 2.1]{bedosart} there exists a twisted action $(\alpha,\sigma)$ of $G/Z$ on $C^*_r(Z)$ such that
$$C^*_r(G)\cong C^*_r(Z)\rtimes_{\alpha,r}^\sigma (G/Z),$$
where we may choose $\alpha\colon G/Z\to\mr{Aut}(C^*_r(Z))$ to be the trivial map. If $G/Z$ is exact and $C^*$-simple, then $(C^*_r(Z),G/Z,\alpha,\sigma)$ is exact and has Powers' averaging property by Theorem \ref{convtwist} so that Proposition \ref{idealtrivialact} applies. Hence the ideals of $C^*_r(G)$ are in one-to-one correspondence with the ideals of $C^*_r(Z)$, i.e., the open (or closed) subsets of the dual group $\hat{Z}$ of $Z$.

For instance, for $n\geq 3$ the braid group $B_n$ with $n$ generators has center $Z_n\cong\ZZ$, and B\'{e}dos proved in \cite[p. 536]{bedosart} that $B_n/Z_n$ is C*-simple. Further, we have short exact sequences
\begin{align*}
1 \to \FF_{n-1} \to P_n/Z_n \to P_{n-1}/Z_{n-1} \to 1,\\
1 \to P_n/Z_n \to B_n/Z_n \to S_n \to 1
\end{align*}
where $\FF_{n-1}$ is the free non-abelian group of $n-1$ generators, $P_n$ is the pure braid group on $n$ generators and $S_n$ is the symmetric group on $n$ generators \cite[Proposition 6]{giordanodlh}. We recall that free groups and finite groups are exact, and that extensions of exact groups are exact \cite[Proposition 5.1.11]{brownozawa}. As $P_2=Z_2$ we may thus conclude by induction that $P_n/Z_n$ is exact, so that $B_n/Z_n$ is exact. Therefore the ideals of $C^*_r(B_n)$ are in one-to-one correspondence with the open (or closed) subsets of $\TT$.\end{exam}

\section{Tracial states on reduced twisted crossed products}

\noindent In this section we will relate tracial states on reduced crossed products of twisted C*-dynamical systems to tracial states on the underlying C*-algebra.

The following lemma is well-known, and we omit the proof.
\begin{lemma}Let $(A,G,\alpha,\sigma)$ be a twisted C*-dynamical system and let $E$ denote the canonical conditional expectation of $A\rtimes_{\alpha,r}^\sigma G$ onto $A$. If $\tau$ is a $G$-invariant trace on $A$, then $\tau\circ E$ is a trace on $A\rtimes_{\alpha,r}^\sigma G$.\end{lemma}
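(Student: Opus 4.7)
The plan is to verify the trace identity on a dense subalgebra and then reduce the computation to an identity in $A$ that can be settled via the $2$-cocycle relations together with $G$-invariance of $\tau$. First I would observe that $\tau\circ E$ is automatically a state, since $E$ is a unital conditional expectation and $\tau$ is a state; only the trace property $(\tau\circ E)(xy)=(\tau\circ E)(yx)$ requires work. By linearity and norm continuity, this identity need only be checked on monomials $x=a\lambda_\sigma(s)$ and $y=b\lambda_\sigma(t)$ with $a,b\in A$ and $s,t\in G$, since finite sums of such monomials are norm dense in $A\rtimes_{\alpha,r}^\sigma G$.

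Using the covariance relations $\lambda_\sigma(s)b=\alpha_s(b)\lambda_\sigma(s)$ and $\lambda_\sigma(s)\lambda_\sigma(t)=\sigma(s,t)\lambda_\sigma(st)$, both $xy$ and $yx$ can be brought into the canonical form (an element of $A$) times $\lambda_\sigma$ of a single group element. Since $E$ annihilates $\lambda_\sigma(g)$ for $g\neq 1$, both $E(xy)$ and $E(yx)$ vanish unless $t=s^{-1}$, in which case the problem reduces to checking
\[
\tau\bigl(a\alpha_s(b)\sigma(s,s^{-1})\bigr)=\tau\bigl(b\alpha_{s^{-1}}(a)\sigma(s^{-1},s)\bigr)
\]
for all $a,b\in A$ and all $s\in G$.

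The hardest part will be reconciling the two cocycle values $\sigma(s,s^{-1})$ and $\sigma(s^{-1},s)$, which enter asymmetrically on the two sides. I would specialise the cocycle identity $\sigma(r,s)\sigma(rs,t)=\alpha_r(\sigma(s,t))\sigma(r,st)$ to $(r,s,t)=(s,s^{-1},s)$ and apply the normalization $\sigma(1,\cdot)=\sigma(\cdot,1)=1$ to obtain $\alpha_s(\sigma(s^{-1},s))=\sigma(s,s^{-1})$. Then, applying $\alpha_s$ to the right-hand side (which preserves $\tau$ by $G$-invariance) and using $\alpha_s\circ\alpha_{s^{-1}}=\mr{Ad}(\sigma(s,s^{-1}))$, the stray unitary factors $\sigma(s,s^{-1})^*\sigma(s,s^{-1})$ cancel, and the tracial property of $\tau$ on $A$ closes the computation. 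Everything else is routine crossed-product manipulation.
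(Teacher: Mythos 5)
Your proposal is correct and complete: the reduction to monomials $a\lambda_\sigma(s)$, $b\lambda_\sigma(t)$, the observation that only $t=s^{-1}$ contributes, the identity $\alpha_s(\sigma(s^{-1},s))=\sigma(s,s^{-1})$ from the cocycle relation at $(s,s^{-1},s)$, and the cancellation via $\alpha_s\circ\alpha_{s^{-1}}=\mathrm{Ad}(\sigma(s,s^{-1}))$ all check out. The paper omits the proof of this lemma as well-known, and your argument is exactly the standard computation it is alluding to, so there is nothing to compare against.
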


We recall that any discrete group $G$ has a largest amenable normal subgroup $R_a(G)$, called the \emph{amenable radical} of $G$. Any amenable normal subgroup of $G$ is then contained in $R_a(G)$. In \cite[Theorem 4.1]{kalantarkennedyozbr} it is proved that the canonical trace $\tau$ on the reduced group C*-algebra $C^*_r(G)$ satisfies $\tau(\lambda(g))=0$ for all $g\notin R_a(G)$. As it turns out, the argument therein easily extends to reduced twisted crossed products of $G$.

\begin{theorem} \label{amrad}
Let $(A,G,\alpha,\sigma)$ be a twisted C*-dynamical system over a discrete group $G$ with amenable radical $R_a(G)$. For every tracial state $\tau$ on the reduced crossed product $A \rtimes_{\alpha,r}^\sigma G$, $\tau = \tau \circ E_{R_a(G)}$, where $E_{R_a(G)}$ denotes the canonical conditional expectation from $A \rtimes_{\alpha,r}^\sigma G$ to $A \rtimes_{\alpha,r}^\sigma R_a(G)$.
\end{theorem}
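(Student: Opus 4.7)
The plan is to reduce the claim to showing $\tau(a\lambda_\sigma(g))=0$ for every $a\in A$ and every $g\in G\sm R_a(G)$. Since elements of the form $a\lambda_\sigma(g)$ have dense linear span in $A\rtimes_{\alpha,r}^\sigma G$ and since both $\tau$ and $E_{R_a(G)}$ are bounded, it suffices to verify $\tau(y)=\tau(E_{R_a(G)}(y))$ on such monomials. For $g\in R_a(G)$ the identity is automatic because $E_{R_a(G)}(a\lambda_\sigma(g))=a\lambda_\sigma(g)$, while for $g\notin R_a(G)$ we have $E_{R_a(G)}(a\lambda_\sigma(g))=0$ and the claim reduces exactly to the vanishing statement.

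The key external input is the theorem of Furman, reproven via boundary methods in \cite{kalantarkennedyozbr}, that $R_a(G)$ coincides with the kernel of the action $G\car\partial_FG$. Given this, for $g\notin R_a(G)$ one can pick $x_0\in\partial_FG$ with $gx_0\neq x_0$, and then $f\in C(\partial_FG)$ with $f(x_0)=1$ and $f(g^{-1}x_0)=0$. Then one applies Lemma \ref{lemmaconv} to the state $\tau$ and the point $x_0$ in the natural extension $(A\otimes C(\partial_FG),G,\beta,\tau)$ (abusing notation, as the symbol $\tau$ now denotes both the state and the cocycle on $A\otimes C(\partial_FG)$). This yields a net $(g_j)$ in $G$ and a state $\psi$ on $(A\otimes C(\partial_FG))\rtimes_{\beta,r}^\tau G$ with $\psi|_{C(\partial_FG)}=\delta_{x_0}$ and $\psi|_{A\rtimes_{\alpha,r}^\sigma G}=\lim_j \tau\circ\mr{Ad}(\lambda_\sigma(g_j))$ in the weak$^*$-topology. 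The tracial property forces $\tau\circ\mr{Ad}(\lambda_\sigma(g_j))=\tau$ for every $j$, hence $\psi|_{A\rtimes_{\alpha,r}^\sigma G}=\tau$. Because $\psi|_{C(\partial_FG)}=\delta_{x_0}$ is a character, $C(\partial_FG)$ lies in the multiplicative domain of $\psi$, and the computation from the proof of Lemma \ref{lemmaconv1} then applies verbatim to give
\[
\tau(a\lambda_\sigma(g))=\psi((a\otimes 1)\lambda_\tau(g))=f(g^{-1}x_0)\,\psi((a\otimes 1)\lambda_\tau(g))=0.
\]

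The only place where traciality is used is in forcing $\psi|_{A\rtimes_{\alpha,r}^\sigma G}$ to equal $\tau$ under the weak$^*$-limit produced by Lemma \ref{lemmaconv}; without traciality, the averaging net could move the restriction to an unrelated state, and the chain $\tau(a\lambda_\sigma(g))=\psi((a\otimes 1)\lambda_\tau(g))$ would break. This is the main conceptual obstacle to overcome, but it dissolves as soon as one notices the invariance $\tau\circ\mr{Ad}(\lambda_\sigma(g_j))=\tau$. With that fixed, the argument is a direct adaptation of Lemma \ref{lemmaconv1}, the Furman description $R_a(G)=\ker(G\car\partial_FG)$ replacing the freeness of the $\partial_FG$-action used there for C*-simple $G$. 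In effect this is the twisted-crossed-product analogue of the strategy in \cite[Theorem 4.1]{kalantarkennedyozbr}, as the authors indicate just before the statement.
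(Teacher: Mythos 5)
Your proof is correct and follows essentially the same route as the paper's: traciality gives invariance under $\mr{Ad}(\lambda_\sigma(g_j))$, Lemma \ref{lemmaconv} produces a state on the natural extension restricting to $\tau$ on $A\rtimes_{\alpha,r}^\sigma G$ and to a point mass on $C(\partial_FG)$, and Furman's identification of $R_a(G)$ with the kernel of the action on $\partial_FG$ feeds the multiplicative-domain computation from Lemma \ref{lemmaconv1}. The only (harmless) difference is that you choose the point $x_0$ with $gx_0\neq x_0$ up front when invoking Lemma \ref{lemmaconv}, whereas the paper applies the lemma at an arbitrary point and then uses minimality of the boundary action to relocate to such a point.
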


\begin{proof}
We must show that for any tracial state $\phi$ on $A\rtimes_{\alpha,r}^\sigma G$ and any $a \in A$, we have $\phi(\lambda_\sigma(g))=0$ for all $a\in A$ and $g\notin R_a(G)$. We consider the natural extension $(A\otimes C(\partial_FG),G,\beta,\tau)$. By Lemma \ref{lemmaconv} and the $G$-invariance of $\phi$, there is a state $\psi$ on $(A \otimes C(\partial_FG)) \rtimes_{\beta,r}^\tau G$, a net $(g_i)$ in $G$ and $x \in \partial_FG$ such that $\psi|_{A\rtimes_{\alpha,r}^{\sigma}G} = \lim_j \phi \circ \operatorname{Ad}(\lambda_\sigma(g_j))=\phi$ and $\psi|_{C(\partial_FG)} = \delta_x$. 

For $g\notin R_a(G)$, there exists $y\in\partial_F G$ such that $g^{-1}y\neq y$ \cite[Proposition 7]{furman}. By minimality of the $G$-action on $\partial_FG$ there exists a net $(h_i)$ in $G$ such that $h_ix\to y$. By compactness, we may assume that $(\psi\circ\mr{Ad}\lambda_{\sigma'}(h_i))$ converges to a state $\eta$ on $(A \otimes C(\partial_FG)) \rtimes_{\beta,r}^\tau G$ which, by the $G$-invariance of $\phi$, then satisfies $\eta|_{A\rtimes_{\alpha,r}^{\sigma}G}=\phi$ and $\eta|_{C(\partial_F G)}=\delta_y$. As $C(\partial_FG)$ is contained in the multiplicative domain of $\eta$, taking a function $f\in C(\partial_FG)$ such that $f(g^{-1}y)=0$ and $f(y)=1$ now yields $\phi(a\lambda_\sigma(g))=0$ for any $a \in A$, just as in the proof of Lemma \ref{lemmaconv1}.
\end{proof}

The next result was previously shown in \cite[Corollary 3.9]{bedosconti} for the case when $G$ is a $(\mr{P}_{\mr{com}})$ or $PH$ group.

\begin{cor} \label{tracecor}
Let $(A,G,\alpha,\sigma)$ be a twisted C*-dynamical system over a discrete group $G$. Suppose that $G$ is C*-simple or, more generally, that the amenable radical of $G$ is trivial. For every $G$-invariant tracial state $\tau$ on $A$, $\tau \circ E$ is a tracial state on the reduced twisted crossed product $A \rtimes_{\alpha,r}^\sigma G$, where $E$ denotes the canonical conditional expectation of $A \rtimes_{\alpha,r}^\sigma G$ onto $A$. Conversely, every tracial state on $A \rtimes_{\alpha,r}^\sigma G$ arises in this way from a $G$-invariant tracial state on $A$. Moreover, this correspondence is bijective. Thus $A \rtimes_{\alpha,r}^\sigma G$ has a unique tracial state if and only if $A$ has a unique $G$-invariant tracial state.
\end{cor}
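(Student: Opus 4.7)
The plan is to view the corollary as an essentially formal consequence of Theorem~\ref{amrad} together with the preceding (unnamed) lemma asserting that $\tau \circ E$ is a trace on $A \rtimes_{\alpha,r}^\sigma G$ whenever $\tau$ is a $G$-invariant trace on $A$. The hypothesis ``$G$ is C*-simple, or more generally $R_a(G) = \{1\}$'' enters in exactly one place: when $R_a(G)$ is trivial, the subalgebra $A \rtimes_{\alpha,r}^\sigma R_a(G)$ of Theorem~\ref{amrad} collapses to $A$, and $E_{R_a(G)}$ becomes the canonical conditional expectation $E$ itself. Thus Theorem~\ref{amrad} reads $\phi = \phi \circ E$ for every tracial state $\phi$ on $A \rtimes_{\alpha,r}^\sigma G$, and this identity is the engine of the whole argument.

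For the forward direction, given a $G$-invariant tracial state $\tau$ on $A$, the preceding lemma gives immediately that $\tau \circ E$ is a tracial state on $A \rtimes_{\alpha,r}^\sigma G$, so no further work is needed. For the reverse direction, let $\phi$ be a tracial state on $A \rtimes_{\alpha,r}^\sigma G$ and set $\tau = \phi|_A$. Restrictions of states are states, and restrictions of traces are traces, so $\tau$ is a tracial state on $A$. To check $G$-invariance, I would use the trace property of $\phi$ together with the covariance relation $\alpha_g(a) = \lambda_\sigma(g) a \lambda_\sigma(g)^*$ to compute, for $g \in G$ and $a \in A$,
\[
\tau(\alpha_g(a)) = \phi(\lambda_\sigma(g) a \lambda_\sigma(g)^*) = \phi(a \lambda_\sigma(g)^* \lambda_\sigma(g)) = \phi(a) = \tau(a).
\]
By Theorem~\ref{amrad} (in the form $\phi = \phi \circ E$ noted above), $\phi = \tau \circ E$.

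Bijectivity is then automatic: the map $\tau \mapsto \tau \circ E$ has one-sided inverse $\phi \mapsto \phi|_A$ because $E|_A = \idd_A$ forces $(\tau \circ E)|_A = \tau$, and the reverse direction above supplies the other composition $\phi \mapsto \phi|_A \mapsto \phi|_A \circ E = \phi$. The final claim about unique tracial states is an immediate consequence of the bijection. The only conceptual content of the proof lies in invoking Theorem~\ref{amrad}; all the remaining steps are formal manipulations of traces, conditional expectations, and the covariance identity, so I do not anticipate any genuine obstacles here---the main obstacle has already been handled upstream in Theorem~\ref{amrad}.
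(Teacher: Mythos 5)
Your proposal is correct and follows essentially the same route as the paper: both reduce the statement to Theorem \ref{amrad} (which, for trivial amenable radical, gives $\phi = \phi \circ E$ for every tracial state $\phi$) together with the preceding lemma for the forward direction. The paper's proof is just a terser version of yours; your explicit verification that $\phi|_A$ is $G$-invariant via the covariance relation is a detail the paper leaves implicit.
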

 
\begin{proof}By Theorem \ref{amrad}, it follows that $\tau=\tau\circ E$ for any tracial state $\tau$ on $A\rtimes_{\alpha,r}^\sigma G$. Any tracial state $\tau$ on $A\rtimes_{\alpha,r}^\sigma G$ is therefore uniquely determined by its restriction to $A$. The claim now follows.\end{proof}

\subsection*{Acknowledgements}
The present work was initiated during the first author's stay at the University of Waterloo in the fall of 2015. The first author would like to thank the aforementioned institution for its generosity and hospitality.

Both authors are grateful to Erik B\'{e}dos and Narutaka Ozawa for their helpful comments and suggestions.

\bibliography{bib}

\end{document}